\newtheorem{theorem}{Theorem}[section]
\DeclareMathOperator*{\argmin}{argmin} 
\title{Data-driven Control of Agent-based Models: an Equation/Variable-free Machine Learning Approach}
\author{ {Dimitrios G. Patsatzis} \\
    Dept. of Science and Technology for \\
	Energy and Sustainable Mobility \\
	Consiglio Nazionale delle Ricerche \\
	Naples, Italy \\ 
	 \texttt{dimitrios.patsatzis@stems.cnr.it} \\
	\And
	{Lucia Russo} \\
	Dept. of Science and Technology for \\
	Energy and Sustainable Mobility \\
	Consiglio Nazionale delle Ricerche \\
	Naples, Italy \\ 
    \texttt{lucia.russo@stems.cnr.it}  \\
	\And
	{Ioannis G. Kevrekidis} \\
	Dept. of Chemical and Biomolecular Engineering \& \\Dept. of Applied Mathematics and Statistics \&\\
	Dept. of Medicine, Johns Hopkins University\\
	Baltimore, USA\\
	\texttt{yannisk@jhu.edu} \\
	\And
	{Constantinos Siettos} \thanks{Corresponding author} \\
	Dept. of Mathematics and Applications \&\\
    Scuola Superiore Meridionale\\
    Universit\`a degli Studi di Napoli Federico II\\
	Naples, Italy\\
	\texttt{constantinos.siettos@unina.it} \\
}
\begin{document}
\maketitle

\begin{abstract}
We present an Equation/Variable free machine learning (\emph{EVFML}) framework for the control of the collective dynamics of complex/multiscale systems modelled via microscopic/agent-based simulators. The approach obviates the need for construction of surrogate, reduced-order models.~The proposed implementation consists of three steps: (A) from high-dimensional agent-based simulations, machine learning (in particular, non-linear manifold learning (Diffusion Maps (DMs)) helps identify a set of coarse-grained variables that parametrize the low-dimensional manifold on which the emergent/collective dynamics evolve. The out-of-sample extension and pre-image problems, i.e. the construction of non-linear mappings from the high-dimensional input space to the low-dimensional manifold and back, are solved by coupling DMs with the Nystr\"{o}m extension and Geometric Harmonics, respectively; (B) having identified the manifold and its coordinates, we exploit the Equation-free approach to perform numerical bifurcation analysis of the emergent dynamics; then (C) based on the previous steps, we design data-driven embedded wash-out controllers that drive the agent-based simulators to their intrinsic, imprecisely known, emergent open-loop unstable steady-states, thus demonstrating that the scheme is robust against numerical approximation errors and modelling uncertainty. The efficiency of the framework is illustrated by controlling emergent unstable (i) traveling waves of a deterministic agent-based model of traffic dynamics, and (ii) equilibria of a stochastic financial market agent model with mimesis.
\end{abstract}

\keywords{Complex Systems \and Agent-based Models \and Machine Learning \and Non-linear Manifold Learning \and Multiscale Analysis \and Control \and Uncertainty}

\section{Introduction}

Scientific computation and control of the emergent/collective dynamics of high-dimensional multiscale/complex dynamical systems constitute open challenging tasks due to (a) the lack of physical insight and knowledge of the appropriate macroscopic quantities needed to usefully describe the evolution of the emergent dynamics, (b) the so-called ``curse of dimensionality'' when trying to efficiently learn surrogate models with good generalization properties, and (c) the problem of  bridging the scale where individual units (atoms, molecules, cells, bacteria, individuals, robots) interact, and the macroscopic scale where the emergent properties arise and evolve \citep{kevrekidis2003equation,liu2016control,diBernardo2020,karniadakis2021physics}.
For the task of identification of macroscopic variables from high-fidelity simulations/spatio-temporal data, various machine learning methods have been proposed including non-linear manifold learning algorithms such as Diffusion Maps (DMs) \citep{coifman2005geometric,nadler2006diffusion,coifman2008diffusion,singer2009detecting,chiavazzo2014reduced,liu2015equation,koronaki2020data,lee2020coarse,galaris2022numerical}, ISOMAP \citep{balasubramanian2002isomap,bollt2007attractor,bhattacharjee2016nonlinear} and Local Linear Embedding \citep{roweis2000nonlinear,papaioannou2021time} but also Autoencoders \citep{chen2018molecular,vlachas2022multiscale}. 
For the task of the extraction of surrogate models for the approximation of the emergent dynamics, available approaches include the Sparse Identification of the Nonlinear Dynamics (SINDy) \cite{brunton2016discovering}, the Koopman operator \citep{koopman1932dynamical,mezic2013analysis,williams2015data,brunton2016koopman,dietrich2020koopman,mauroy2020koopman}, Gaussian Processes \cite{lee2020coarse,papaioannou2021time,raissi2018hidden}, Artificial Neural Networks (ANNs) \cite{lee2020coarse,galaris2022numerical}, Recursive Neural Networks (RNN) \cite{vlachas2022multiscale}, Deep Learning \citep{raissi2019physics}, as well as Long Short-Term Memory (LSTM) networks \cite{vlachas2018data}.
For the task of the bridging of the micro- and macro-scales, the Equation-free (EF) multiscale framework, introduced back in the early 2000s years, \citep{kevrekidis2003equation,theodoropoulos2000coarse,kevrekidis2004equation}
bypasses the need to extract explicit, closed form macroscopic/surrogate models of any type (e.g. black or gray box ODEs or PDEs) by bridging  the micro- and macro- scales ``on demand''.~If one has {\em a priori} physical insight/knowledge of a good set of macroscopic observables, this bridging  is achieved with the aid of \emph{coarse-timestepping}, which is a discrete mapping in time of the macroscopic observables.~The key assumption behind the EF approach is that the microscopic/fine dynamics are quickly attracted towards a slow, invariant manifold, parametrized by the first few low-order moments of the microscopic/fine distribution \citep{kevrekidis2003equation,zagaris2009analysis,siettos2022numerical}. The EF framework allows the execution of system-level tasks (numerical bifurcation analysis, control, optimization) directly at the macroscopic level using the microscopic/agent-based simulator as a computational experiment, short bursts of whose dynamics can be initialized, run and observed on demand.~Recently, Maclean et al. \cite{maclean2021toolbox} developed a toolbox of EF functions in Matlab.~Based on the concept of EF and pseudo-arc-length continuation for the construction of bifurcation diagrams with feedback control \citep{siettos2004coarse}, several control-based approaches have been also developed for performing experimentally assisted bifurcation analysis (see \citep{sieber2011control,barton2013systematic,renson2019numerical,panagiotopoulos2022continuation} and references therein).~In these works, the experiment plays the role of the ``black box'' microscopic/agent-based/complex system.
What is different here, with respect to computer simulations, is that in ``experimental experiments'' one can not usually initialize the experiment at will - something that can be done relatively easily in ``computational experiments'' \citep{kevrekidis2003equation}.\par

In summary, all the above works focus mainly on either (a) the identification, from high-fidelity simulations, of a set of coarse-grained variables, and then using machine learning (e.g., ANNs, GPR) the construction of reduced-order surrogate black-box models for the emergent dynamics (that are then used for low-fidelity simulations, bifurcation analysis and control), or (b) on the EF coarse-grained numerical analysis and control of the microscopic/agent-based dynamics, assuming that the coarse-grained macroscopic variables are known before-hand. Very recently, Chin et al.\cite{chin2022enabling} used DMs to find a set of macroscopic variables from spatio-temporal data produced through the so-called optimal velocity traffic model (OVM) on a ring-shaped road and, based on these, performed numerical bifurcation analysis of the emergent dynamics within the EF framework.\par
In this work, we propose a fully data-driven Equation/Variable-free framework based on machine learning (\emph{EVFML}) for the control of the emergent dynamics of complex/multiscale dynamical systems modelled via detailed agent-based/microscopic simulators. The approach is deployed in three main steps (see also Fig.(\ref{fig.overalchematic}).
\begin{figure}[ht]
    \centering
    \includegraphics[scale=0.62]{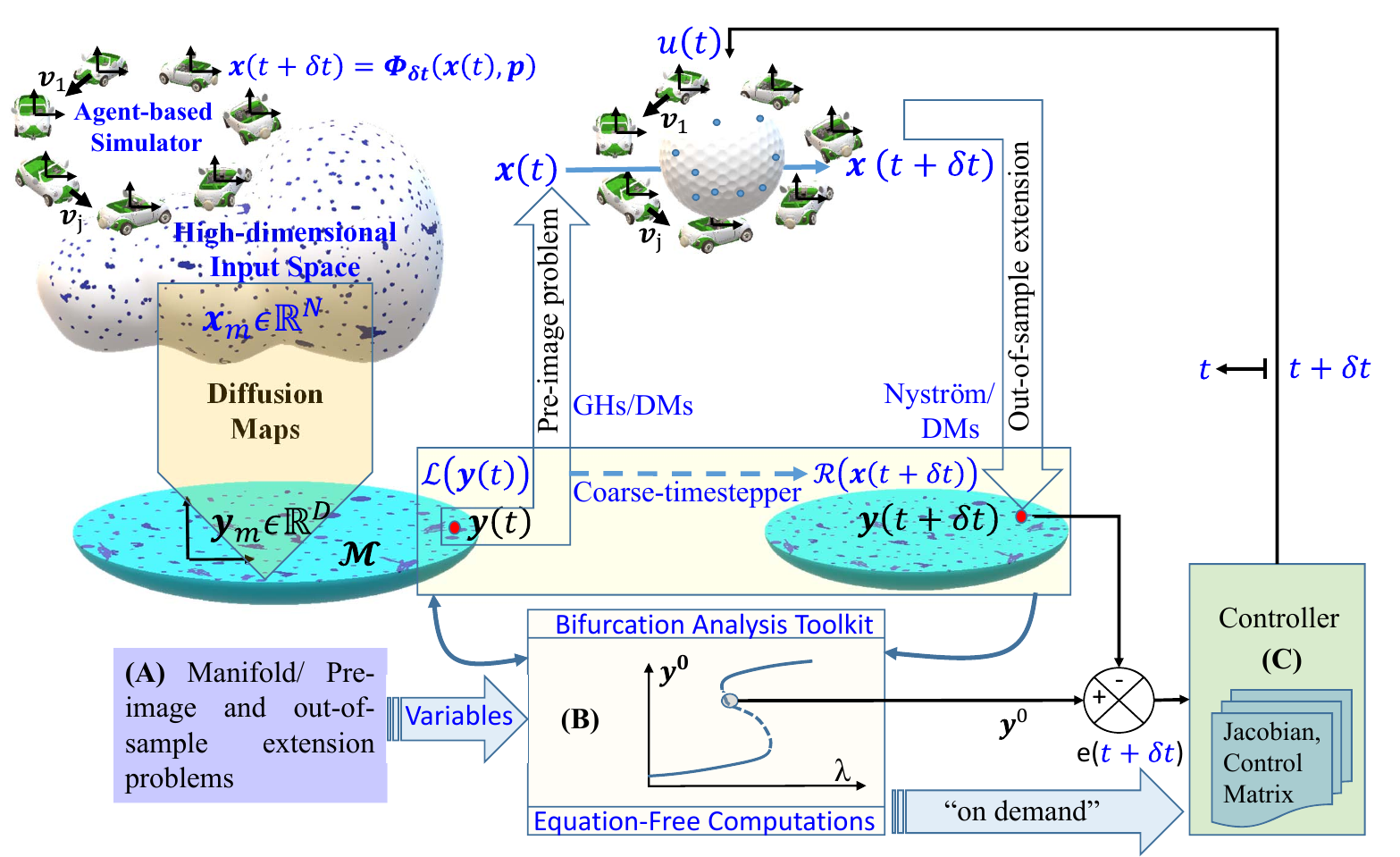} 
    \caption{Schematic of the proposed data-driven \emph{EVFML} framework, based on machine learning, for the control of the emergent/collective dynamics of complex systems modelled via agent-based simulators. The approach is deployed in three main steps: (A) Discovery of the low-dimensional manifold $\mathcal{M}$ and its coordinates using non-linear manifold learning and in particular Diffusion Maps (DMs), from a high-dimensional point cloud, (B) Equation-free numerical bifurcation analysis on $\mathcal{M}$, (C) Repetition of (A) around the coarse-grained steady-states of interest over the control parameter space and ''on demand'' design of wash-out controllers for the emergent dynamics.} 
    \label{fig.overalchematic}
    \end{figure}
In the first step (A), based on detailed agent-based simulations over the bifurcation parameter space, we use machine learning, and in particular non-linear manifold learning (DMs), to discover a set of coarse-grained variables that can be used to define a low-dimensional manifold on which the emergent dynamics evolve. This first step involves also the solution (i) of the out-of-sample extension problem (i.e. the construction of a map that \emph{embeds/restricts/projects} new unseen points in the high-dimensional input space to the low-dimensional manifold) coupling the Nystr\"{o}m method with DMs, and, (ii) of the pre-image problem (i.e. the construction of the non-linear inverse map that \emph{reconstructs/lifts/pre-images} new unseen points in the manifold to the high-dimensional input space), coupling Geometric Harmonics (GHs) with DMs. In the second step (B), based on the identified coarse-grained variables and the construction of the restriction and lifting operators from step A, we exploit the EF approach to perform numerical bifurcation analysis of the emergent dynamics, thus tracing steady-state solution branches of the emergent/collective dynamics and quantifying their stability. In the third step (C), with the information extracted from step (B), we design embedded wash-out controllers (\citep{abed1994stabilization,siettos2012equation}) that drive the agent-based simulators to converge to their intrinsic, imprecisely known, emergent open-loop unstable steady-states. In order to increase the numerical approximation accuracy of the numerical quantities (such as Jacobians and control matrices) required for the design of the controllers, we repeat the procedure of step A around the steady-states of interest over the control parameter space. Thus, we demonstrate that the proposed control scheme is robust against numerical approximation errors/modelling uncertainties introduced in the estimation of the actual values of the coarse-grained steady-states and the numerical solution of the out-of-sample extension and pre-image problems.\par
The performance of the \emph{EVFML} framework is assessed through two problems: (a) the stabilization of unstable traveling waves of a deterministic agent-based simulator describing traffic dynamics along a ring-shaped road \citep{chin2022enabling,marschler2014coarse}, and, (b) the stabilization of unstable equilibria of a stochastic agent-based model describing the dynamics of a simple financial market with mimesis \citep{siettos2012equation,omurtag2006modeling}.\par
We demonstrate that our \emph{EVFML} framework allows the accurate learning and control of the emergent dynamics without any prior knowledge/physical insight  of the ``correct'' macroscopic observables, and despite numerical approximation errors/ modelling uncertainty, thus bypassing the need to construct any type of global surrogate models that {\em de facto} introduce biases. Furthermore, the \emph{EVFML} framework suggests a new way to engineer complex systems via machine/deep learning, as potentially it can be used to facilitate: (a) the data collection in an intelligent/targeted way ``on demand''  around the states of interest over the parameter space, and (b) the bridging of the notional gap between high-dimensional data point clouds, physics interpretability, numerical analysis and control.
%
%
%
%
%
\section{Materials and Methods}
\label{sec:Met}

In what follows, we present the various elements of the three-step proposed Equation/Variable-free machine learning (\emph{EVFML}) framework. We begin with the description of the procedure for the identification of a set of coarse-grained variables that parametrize a low-dimensional manifold on which the emergent dynamics evolve using Diffusion Maps (DMs).~We also present the methodology for solving the out-of-sample extension and pre-image problems coupling DMs with the Nystr\"{o}m method and Geometric Harmonics (GHs), respectively; it should be noted that the solution of the pre-image problem, i.e. that of the inversion of the non-linear feature map, is in general an ill-posed problem and as such its solution is not unique. Next, we provide a brief description of the Equation-free (EF) framework and the concept of the \emph{coarse-timestepper}, which enables the targeted extraction of macroscopic system-level information directly from the detailed/fine scale, microscopic/agent-based simulations.~Finally, we illustrate the ``on-demand'' design of embedded controllers, in particular wash-out filters, to stabilize embedded regular unstable steady-states, thus demonstrating that the proposed scheme is, under certain assumptions, robust to modelling uncertainties introduced in the approximation of the manifold and the construction of the non-linear mappings from the input space to the manifold and back.~For an overview of the various steps, please refer to Fig.~\ref{fig.overalchematic}.
%
\subsection{Step A. Discovery of Coarse-grained variables via Diffusion Maps and the numerical solution of the out-of-sample extension and pre-image problems}
\label{sub:DM}

For discovering coarse-grained variables that parametrize the underlying manifold on which the emergent dynamics evolve, we use non-linear manifold learning and in particular Diffusion Maps (DMs) \citep{coifman2005geometric,coifman2008diffusion,nadler2006diffusion,coifman2006geometric}.~The goal is to construct a non-linear mapping from a high-dimensional space to a low-dimensional subspace, so that the intrinsic geometry of the embedded manifold is preserved.~Given the collected $M$ observations $\mathbf{x}_m\in \mathbb{R}^N, m=1,\ldots,M$ in the high-dimensional space, which can be compactly written in a row-wise matrix form $\mathbf{X}\in\mathbb{R}^{M\times N}$, we assume that these data are embedded in a smooth low-dimensional manifold $\mathcal{M} \subset \mathbb{R}^N$.~DMs seek to find their low-dimensional representations $\mathbf{y}_m\in \mathbb{R}^D$ with $D\ll N$, compactly written in the matrix form $\mathbf{Y}\in\mathbb{R}^{M\times D}$, such that the Euclidean distance of the points $\mathbf{y}$ (rows of $\mathbf{Y}$) is preserved as the diffusion distance of points in $\mathbf{X}$ (rows of $\mathbf{X}$) \citep{nadler2006diffusion}.

The implementation begins by defining a \emph{similarity metric} between pairs of points, say $\mathbf{x}_i$, $\mathbf{x}_j \in\mathbf{X}$, $\forall i,j=1,\ldots M$ in the high-dimensional space.~By utilizing the Euclidean norm $d_{ij}=||\mathbf{x}_i - \mathbf{x}_j||$, a Gaussian kernel function  $k(\mathbf{x}_i,\mathbf{x}_j)$ is employed for calculating the affinity matrix:
\begin{equation}
    \mathbf{A} = \left[a_{ij} \right]= \left[ k(\mathbf{x}_i,\mathbf{x}_j)  \right] = exp \left( -\dfrac{d_{ij}^2}{\epsilon^2}\right) = exp \left( -\dfrac{||\mathbf{x}_i - \mathbf{x}_j||^2}{\epsilon^2}\right),
    \label{eq:AffM}
\end{equation}
where the shape parameter $\epsilon$ expresses a measure of the local neighborhood in the high-dimensional space.\par
Then, one constructs the $M \times M$ Markovian transition matrix $\mathbf{M}$, by normalizing each row of the affinity matrix, such that
\begin{equation}
\mathbf{M}=\mathbf{D}^{-1}\mathbf{A}, \quad \text{where} \quad \mathbf{D}=\text{diag} \left( \sum_{j=1}^{M}a_{ij} \right).
\label{eq:MarkovM}
\end{equation}
The elements $\mu_{ij}$ of $\mathbf{M}$ correspond to the probability of jumping from one point to another in the high-dimensional space.~In particular, the transition matrix defines a Markovian random walk $X_t$ on the data points, with probability of moving from point $i$ to point $j$ at the $t$-step of a conceptual diffusion process across the data: 
$$ \mu_{ij} = \mu(\mathbf{x}_i,\mathbf{x}_j) = \text{Prob}(X_{t+1} = \mathbf{x}_j|X_t = \mathbf{x}_i)$$ 
Given the weighted graph defined via the Gaussian kernel function $k(\mathbf{x}_i,\mathbf{x}_j)$ on the high-dimensional space, the random walk can be then defined by the transition probabilities
$$ \mu_{ij} = \mu(\mathbf{x}_i,\mathbf{x}_j) = \dfrac{k(\mathbf{x}_i,\mathbf{x}_j)}{deg(\mathbf{x}_i)}, \quad \text{where} \quad deg(\mathbf{x}_i) = \sum_{j=1}^M k(\mathbf{x}_i,\mathbf{x}_j),$$
thus retrieving the expression in Eq.~\eqref{eq:MarkovM}.

The Markovian transition matrix $\mathbf{M}$ is similar to the matrix $\mathbf{\hat{M}} =\mathbf{D}^{-1/2}\mathbf{A}\mathbf{D}^{-1/2}$, which is symmetric and positive definite, thus allowing for the eigendecomposition:
\begin{equation}
\mathbf{M}=\sum_{i=1}^M \lambda_i \mathbf{z}_i \mathbf{y}_{i}^{T}, 
\end{equation}
where $\lambda_i \in \mathbb{R}$ are the eigenvalues and $\mathbf{z}_i \in \mathbb{R}^M$ and $\mathbf{y}_i \in \mathbb{R}^M$ are the left and right eigenvectors of $\mathbf{M}$, respectively, such that $\langle \mathbf{z}_i,\mathbf{y}_j \rangle=\delta_i^j $.~The set of right eigenvectors $\mathbf{y}_i$ provides an orthonormal basis for the low-dimensional subspace in $\mathbb{R}^D$ spanned by the rows of $\mathbf{M}$.~The best $D$-dimensional low-rank approximation of the row space of $\mathbf{M}$ in the Euclidean space $\mathbb{R}^D$ is given by the $D$ right eigenvectors corresponding to the $D$ largest eigenvalues.\par
~Thus, the DMs embedding is realized as the mapping of each observation $\mathbf{x}_m$ to the row vector $$\mathbf{y}_m=(y_{1,m}, \ldots, y_{D,m}), \forall m=1,\ldots,M,$$ where $y_{i,m}$ denotes the $m$-th element of the $i$-th right eigenvector, corresponding to the $i$-th non-trivial, sorted in descending order, eigenvalue $\lambda_i$ for $i=1,\ldots,D\ll N$.
~It has been shown that such a selection of DMs eigenvectors provides the best approximation of the Euclidean distance $\rVert \mathbf{y}_i - \mathbf{y}_j \lVert$ between two points, say $\mathbf{y}_i$, $\mathbf{y}_j\in \mathbf{Y}$ on the low-dimensional space to the diffusion distance on the high-dimensional space  \citep{nadler2006diffusion}, defined as:
$$ D_{t}^2(\mathbf{x}_i,\mathbf{x}_j) = \lVert \mu_t(\mathbf{x}_i,\cdot),\mu_t(\mathbf{x}_j,\cdot)\rVert_{L_2,1/deg}^2 = \sum_{k=1}^{M} \dfrac{(\mu_t(\mathbf{x}_i,\mathbf{x}_k)-\mu_t(\mathbf{x}_j,\mathbf{x}_k))^2} {deg(\mathbf{x}_k)} $$
where $\mu_t(\mathbf{x}_i,\cdot)$ is the $i$-th row of the Markovian transition matrix $\mathbf{M}^t$, corresponding to the transition probabilities after $t$ diffusion steps; here, for our computations, we considered $t=1$.\par
In practice, the embedded dimension $D$ is determined by the spectral gap of the eigenvalue ratio of the transition matrix $\mathbf{M}$, assuming that the first $D$ leading eigenvalues are adequate for providing a good approximation of the diffusion distance between all pairs of points \citep{coifman2008diffusion}.~However, this is not always the case and one should consider employing \textit{parsimonious} DMs \citep{dsilva2018parsimonious,holiday2019manifold} for selecting the eigenvectors that
provide the best low-dimensional embedding.\par
For the cases considered in this work, though, such a parsimonious identification was not required, since the spectral gap was an adequate criterion for discovering the DMs eigenvectors that provide the desired manifold parametrization.

\subsection{The numerical solution of the Out-of-sample extension and Pre-image problems}
\label{sub:OoSE}

The coarse-scale variables that parametrize the low-dimensional manifold are provided in a data-driven manner by the DMs embeddings over the parameter space, thus circumventing intuition-based identification.~However, for employing the \emph{coarse-timestepper} of the EF framework, it is required to construct the restriction and lifting operators so that they are able to cope with out-of sample data.~This task requires the numerical solution of the so-called out-of-sample extension and pre-image problems, respectively.
%
%
%
\subsubsection{The Out-of-sample extension problem: the Restriction operator}
\label{subsub:Restr}

Let the high-dimensional data set be $\mathbf{X}=\{\mathbf{x}_m \in \mathbb{R}^N | m=1,\ldots,M\}$ of $M$ observations and assume that the employment of DMs in Section~\ref{sub:DM} results in a low-dimensional embedding with, say, $\lambda_i$ for $i=1,\ldots,D$ sorted eigenvalues and their associated right eigenvectors $\mathbf{y}_i$.~Here, the restriction operator $\mathcal{R}$ evaluated at a point of the given data set $\mathbf{x}_m$ is defined as:
\begin{equation}
    \mathcal{R}(\mathbf{x}_m) = (y_{1,m}, \ldots, y_{D,m}) = \mathbf{y}_m \in \mathbb{R}^D, \qquad m=1,\ldots,M,
    \label{eq:Rest1}
\end{equation}
where $y_{i,m}$ denotes the $m$-th component of the $i$-th DMs eigenvector $\mathbf{y}_i$.~However, in the case where new unseen high-dimensional data points are presented, one needs to recalculate the DMs embeddings.~To avoid the computational cost of such a procedure, we instead utilize the so-called Nystr\"{o}m method \citep{nystrom1929praktische} to extend $\mathcal{R}$ to new unseen data points \citep{coifman2008diffusion}; i.e. solve the out-of-sample extension problem.\par
The extension is based on the fact that the eigenvectors $\mathbf{y}_i$ form a basis of the low-dimensional subspace (see \cite{coifman2006geometric} for details).~This implies that any function defined at known data points $\mathbf{x}_m$ of the high-dimensional space, say, $f(\mathbf{x}_m)$ can be extended to the low-dimensional manifold as:
\begin{equation}
    \hat{f}(\mathbf{x}_m) = \sum_{i=1}^D a_i \mathbf{y}_i(\mathbf{x}_m) \qquad m=1,\ldots,M,
    \label{eq:GHbasis}
\end{equation}
where $a_i=\langle \mathbf{y}_i, f(\mathbf{x}_m)\rangle$ are the projection coefficients of the function to be extended to the first $D$ eigenvectors.~Using the same projection coefficients, one can now map the function evaluated to new unseen data points, say $\mathbf{x}^n_l \in \mathbb{R}^N$ for $l=1,\ldots,L$, of the high-dimensional space as
\begin{equation}
    \boldsymbol{\mathcal{E}}(\hat{f}(\mathbf{x}^n_l)) = \sum_{i=1}^D a_i \mathbf{\hat{y}}_i(\mathbf{x}^n_l),
    \end{equation}
where,
\begin{equation}
    \mathbf{\hat{y}}_i(\mathbf{x}^n_l)=\dfrac{1}{\lambda_i}\sum_{m=1}^M k(\mathbf{x}_m,\mathbf{x}^n_l) \mathbf{y}_{i}(\mathbf{x}_m), \quad i=1,\ldots,D
    \label{eq:GH}
\end{equation}
are the corresponding Geometric Harmonics and $k(\mathbf{x}_m,\mathbf{x}^n_l)$ is the Gaussian kernel function used to seed the construction of the affinity matrix in Eq.~\eqref{eq:AffM}, this time measuring the similarity of th the new point $\mathbf{x}^n_l$ to the known data points $\mathbf{x}_m$.~Casting the data set of new points $\mathbf{x}^n_l$ for $l=1,\ldots,L$ in the matrix form $\mathbf{X}^n \in \mathbb{R}^{L \times N}$, Eq.~\eqref{eq:GH} can be written in the compact matrix form \cite{coifman2006geometric}:
\begin{equation}
   \boldsymbol{\mathcal{E}}(\boldsymbol{\hat{f}})=\mathbf{K}_{L\times M}\mathbf{Y}_{M\times D}{\boldsymbol{\Lambda}_{D\times D}^{-1}}\mathbf{Y}_{D\times M}^T\boldsymbol{f}_{M\times 1},
   \label{eq:GH2}
\end{equation}
where $\mathbf{K}_{L\times M}$ is the corresponding kernel matrix, $\mathbf{Y}_{M\times D}$ is the matrix of the DMs eigenvectors $\mathbf{y}_i$ in columns and $\boldsymbol{\Lambda}_{D\times D}$ is the diagonal matrix with elements $\lambda_i$.\par
For the solution of the out-of-sample extension problem, we seek to extend the restriction operator of Eq.~\eqref{eq:Rest1} for new out-of-sample unseen data points $\mathbf{X}^n \not\subset \mathbf{X}$ of the high-dimensional space.~Thus, substitution of $\boldsymbol{\hat{f}}$ with $\mathcal{R}$ and of $\boldsymbol{f}$ with $\mathbf{Y}$ to Eq.~\eqref{eq:GH2} (the left and right hand sides in Eq.~\eqref{eq:Rest1}) implies
\begin{equation}
   \mathcal{R}(\mathbf{X}^n)=\boldsymbol{K}_{L\times M}\mathbf{Y}_{M\times D}{\boldsymbol{\Lambda}_{D\times D}^{-1}}.
   \label{eq:Rest2}
\end{equation}
The above extends the definition of the restriction operator to the $L$ new data points in $\mathbf{X}^n$ on the manifold as constructed by the DMs embeddings.~A detailed schematic representation of the Nystr\"{o}m method is displayed in Fig.~\ref{fig:Ny-GH_kNN}, where the image $\mathcal{R}(\mathbf{x}^n_l)$ of the new point $\mathbf{x}^n_l$ is depicted with blue dots and arrows.
\begin{figure}[!hb]
    \includegraphics[width=\textwidth]{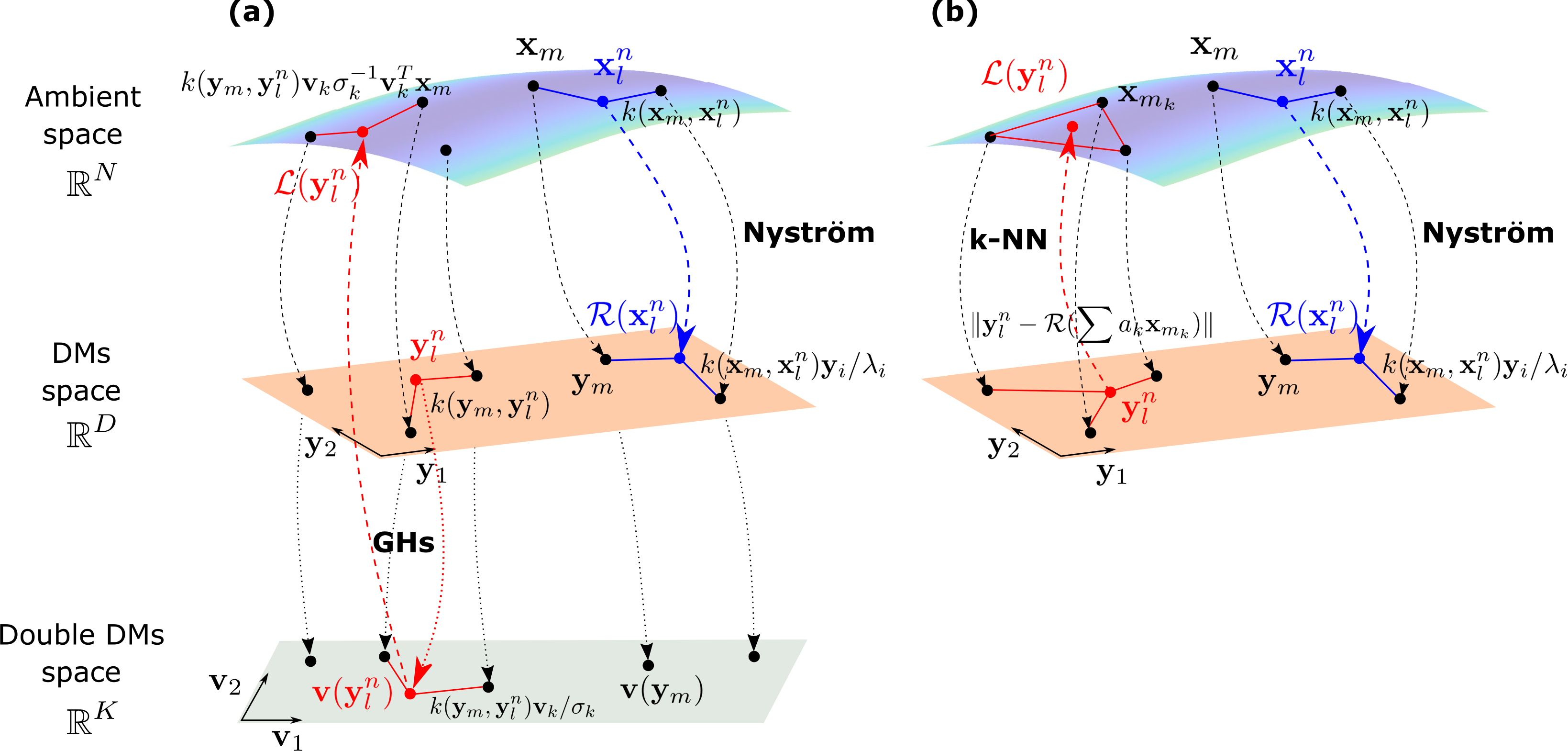} 
    \caption{Schematic representation of the out-of-sample extension problem (blue dots and arrows) via Nystr\"{o}m method and the pre-image problem via GHs (red dots and arrows) at panel (a) and via k-NN at panel (b).~The black dots denote observations in the ambient space $\mathbb{R}^N$ and their representations on the DMs and ``double" DMs spaces $\mathbb{R}^D$ and $\mathbb{R}^K$, respectively.~The Nystr\"{o}m method is performed for obtaining the image $\mathcal{R}(\mathbf{x}^n_l)$ of the new point $\mathbf{x}^n_l$.~The reconstruction with GHs (top panel) and with k-NN (bottom panel) is performed for obtaining the pre-image $\mathcal{L}(\mathbf{y}^n_l)$ of a new point $\mathbf{y}^n_l$.~As described in Section~\ref{subsub:Lift} for the GHs extension, projection to the ``double'' DMs space is first performed for defining a new basis in $\mathbb{R}^K$ through which one obtains the reconstructed state in $\mathbb{R}^N$.}
    \label{fig:Ny-GH_kNN}
\end{figure}
%
%
%
\subsubsection{The Pre-image problem: the Lifting operator.}
\label{subsub:Lift}
The numerical solution of the pre-image problem refers to the reconstruction of high-dimensional representations, given low-dimensional measurements on the manifold.~In terms of the EF analysis, this task corresponds to the construction of a ``lifting'' operator:
\begin{equation}
\mathcal{L}\equiv \mathcal{R}^{-1}: \mathcal{R}(\mathbf{x}) \mapsto \mathbf{x},
\label{eq:Lift1}
\end{equation}
so that new, unseen points in the low-dimensional manifold $\mathbf{y}^n_l \notin \{\mathbf{y}_m = \mathcal{R}(\mathbf{x}_m)| m=1,\ldots,M\}$ for $l=1,\ldots,L$ are mapped back to the original high-dimensional space.~Below, we present the two methodologies utilized in this work to solve the pre-image problem, namely the Geometric Harmonics (GHs) and the k-Nearest Neighbors (k-NN) algorithms.~For a detailed review and comparison of various methods see \citep{chiavazzo2014reduced,evangelou2022double,papaioannou2021time}.
%
%
%
\paragraph{\textbf{Geometric Harmonics (GHs)}} GHs are sets of functions that allow the extension of a function defined on a high-dimensional space to new, unseen points on the low-dimensional manifold \citep{coifman2006geometric}.~Their derivation is based on the Nystr\"{o}m method 
and they constitute the functions shown in  Eq.~\eqref{eq:GH}.\par
Thus, one would employ Eq.~\eqref{eq:GH} for the new points in the manifold $\mathbf{y}^n_l$ for $l=1,\ldots,L$, with the Gaussian kernel function now considered on the low-dimensional space as $k(\mathbf{y}_m,\mathbf{y}^n_l)$, where $\mathbf{y}_m \in \mathcal{R}(\mathbf{X})$ is the DMs embedding of the $m$-th observation.~However, as pointed out by \citep{chiavazzo2014reduced,evangelou2022double}, the basis provided by the DMs embeddings $(\lambda_i, \mathbf{y}_i)$ for $i=1,\ldots,D$ is unable to provide an accurate approximation of the function on the manifold, that is the function of Eq.~\eqref{eq:GHbasis}, which in this case takes the form $\mathbf{\hat{f}}(\mathbf{y}_m)$.~Instead, one should again employ DMs, this time on the $D$ first $\mathbf{y}_i$ eigenvectors, in order to obtain a new basis upon which to extend $\mathbf{\hat{f}}$ \citep{chiavazzo2014reduced,evangelou2022double}.~In what follows, we present the implementation of the ``Double DMs'' technique for calculating the GHs functions.\par
Recall that the employment of the DMs to the high-dimensional data $\mathbf{X}=\{\mathbf{x}_m \in \mathbb{R}^N | m=1,\ldots,M\}$ of $M$ observations over the parameter space result to the embedding $(\lambda_i, \mathbf{y}_i)$ for $i=1,\ldots,D$.~At the first step, similarly to the application of the DMs, a Gaussian kernel function is utilized for constructing the affinity matrix for the $\mathbf{y}_i$ embeddings
\begin{equation}
    \mathbf{\tilde{A}} = \left[\tilde{a}_{ij} \right] = \left[ k(\mathbf{y}_i,\mathbf{y}_j)  \right] = exp \left( -\dfrac{||\mathbf{y}_i - \mathbf{y}_j||^2}{\tilde{\epsilon}^2}\right),
    \label{eq:AffM_GH}
\end{equation}
where $\tilde{\epsilon} \ll \epsilon$ of the ``first'' DMs embedding.~Here, one does not need to calculate a Markovian transition matrix as in the ``first'' DMs, since the accurate approximation of the diffusion distance is no longer needed.~Since $\mathbf{\tilde{A}}$ is a positive and semi-definite matrix, it has a set of non-negative eigenvalues, say $\sigma_k \in \mathbb{R}$, and the corresponding orthonormal right eigenvectors $\mathbf{v}_k \in \mathbb{R}^M$.~The selection of the first $K$ largest eigenvalues, such that $\sigma_K>\delta \sigma_0$, where $\delta>0$ serves as a threshold, determines the accuracy of the new embedding; the smaller/largest $\delta$ is, the more/less accurate the ``second'' DMs embedding is.~We then use the eigenvectors $\mathbf{v}_k$ as a basis to project the function $\mathbf{\hat{f}}$ via the new low-dimensional points $\mathbf{y}^n_l \in \mathbb{R}^D$ for $l=1,\ldots,L$, so that Eq.~\eqref{eq:GH} now reads
\begin{equation*}
    \boldsymbol{\mathcal{E}}(\hat{f}(\mathbf{y}^n_l)) = \sum_{k=0}^K \tilde{a}_k \mathbf{\hat{v}}_k(\mathbf{y}^n_l) \quad \text{where} \quad
    \mathbf{\hat{v}}_k(\mathbf{y}^n_l)=\dfrac{1}{\sigma_k}\sum_{m=1}^M k(\mathbf{y}_m,\mathbf{y}^n_l) \mathbf{v}_{k}(\mathbf{y}_m), \quad k=0,\ldots,K
\end{equation*}
are the GHs, evaluated at the basis obtained via the ``second'' DMs embedding $\mathbf{v}_{k}(\mathbf{y}_m)$.~This time, the projection coefficients of the function $\mathbf{f}(\mathbf{y}_m)$ to be extended are calculated as $\tilde{a}_k = \langle \mathbf{v}_k, \mathbf{f}(\mathbf{y}_m)\rangle$, where $\mathbf{y}_m $ is the DMs embedding of the $m$-th observation.~Casting the extension in the matrix form:
\begin{equation}
   \boldsymbol{\mathcal{E}}(\boldsymbol{\hat{f}})=\mathbf{K}_{L\times M}\mathbf{V}_{M\times K}{\boldsymbol{\Sigma}_{K\times K}^{-1}}\mathbf{V}_{K\times M}^T\boldsymbol{f}_{M\times 1}
   \label{eq:GH_LD2}
\end{equation}
one obtains a similar to Eq.~\eqref{eq:GH2} form, where now $\mathbf{V}_{M\times K}$ is the column-wise matrix of the $K$ eigenvectors $\mathbf{v}_k$ obtained through the ``second'' DMs and $\boldsymbol{\Sigma}_{K\times K}$ is the diagonal matrix with the corresponding eigenvalues $\sigma_k$.\par
Regarding the numerical solution of the pre-image problem, we are interested in constructing the lifting operator for a set of, say $L$, new out-of-sample data points $\mathbf{y}^n_l \notin \{\mathbf{y}_m = \mathcal{R}(\mathbf{x}_m)| m=1,\ldots,M\}$ of the low-dimensional space, compactly written in the matrix form $\mathbf{Y}^n\in\mathbb{R}^{L\times N}$.~Thus, substitution of $\boldsymbol{\hat{f}}$ with $\mathcal{L}$ and of $\boldsymbol{f}$ with $\mathbf{x}$ (the left and right-hand sides of Eq.~\eqref{eq:Lift1}) to Eq.~\eqref{eq:GH_LD2} implies the matrix form
\begin{equation}
   \mathcal{L}(\mathbf{Y}^n)=\mathbf{K}_{L\times M}\mathbf{V}_{M\times K}{\boldsymbol{\Sigma}_{K\times K}^{-1}} \mathbf{V}_{K\times M}^T\mathbf{X}_{M\times N},
   \label{eq:LiftGH}
\end{equation}
which extends the definition of the lifting operator to reconstruct (pre-image) the $L$ new data points in $\mathbf{Y}^n$ in the high-dimensional space.~A detailed schematic representation of the GHs extension is demonstrated in Fig.~\ref{fig:Ny-GH_kNN}(a).\par
It is important here to note that the number $K$ of the ``second'' DMs eigenvectors needs to be high enough for an accurate reconstruction of the high-dimensional space (see \citep{evangelou2022double} for an indicative algorithm on the selection of $\delta$ threshold).~However, the advantage of this method, as evidenced by Eq.~\eqref{eq:LiftGH} is that for the reconstruction of the new data points $\mathbf{Y}^n$, only the matrix $\mathbf{K}$ is required.~This is because the matrix $\mathbf{V}{\boldsymbol{\Sigma}^{-1}} \mathbf{V}^T\mathbf{X}$ is independent of the new data points and thus can be pre-computed for accelerating computations, as also noted by \citep{chiavazzo2014reduced,evangelou2022double}.
%
%
%
\paragraph{\textbf{k-Nearest Neighbors}} The aim of the k-NN approach is to define the lifting operator for an unseen point in the manifold by expressing it as a linear combination of its $k$ nearest neighbors, whose lifted realizations are already known.~This approach involves the numerical solution of an optimization problem \cite{chin2022enabling}.
In particular, one begins by choosing a number $K$ of nearest neighbors, with $K\ge D+1$, near which one assumes that the desired lifted realizations $\mathcal{L}(\mathbf{y}^n_l)$ reside.~The goal is to find such extensions on the high-dimensional space that, when restricted, they will be mapped back as close as possible to the given unseen points in the manifold; i.e.,  $\mathcal{R}(\mathcal{L}(\mathbf{y}^n_l)) \approx \mathbf{y}^n_l$.~We thus assume that the unknown $L$ lifted realizations can be expressed in matrix form as
\begin{equation}
    \mathcal{L} (\mathbf{Y}^n) = \sum_{k=1}^K \mathbf{a}_k \mathbf{x}_{m_k},
\end{equation}
where $\mathbf{x}_{m_k}\in \mathbf{X}$ are data points in the high-dimensional space, the restricted images of which are known via the DMs embeddings; i.e., $\mathcal{R} (\mathbf{x}_{m_k}) = \mathbf{y}_{m_k}$.~Then, one estimates the sets of coefficients $\mathbf{a}_1, \ldots, \mathbf{a}_K \in \mathbb{R}^L$ for the $l$-th point in the manifold through the solution of the optimization problem:
\begin{equation}
    \left( a_{l,1}, \ldots, a_{l,K} \right) = \argmin_{(a_1, \ldots, a_K)\in [0,1]^K} \Bigg\{ \Bigg\| \mathbf{y}^n_l - \mathcal{R}\left( \sum_{i=1}^K a_k \mathbf{x}_{m_k} \right)\Bigg\| : \text{subject to} \sum_{k=1}^K a_k =1 \Bigg\}
    \label{eq:LiftkNN}
\end{equation}
for $l=1, \ldots, L$ where $\mathbf{y}^n_l\in\mathbf{Y}^n$ denotes the $l$-th point in the manifold for which the reconstructed realization is desired.~A schematic representation of the k-NN reconstruction is demonstrated in Fig.~\ref{fig:Ny-GH_kNN}(b).~Note that the optimization problem of Eq.~\eqref{eq:LiftkNN} may in general have multiple solutions, however in practice as the number $K$ of NN increases the minimum gets closer to zero, so that a unique solution is achieved \citep{chin2022enabling}.\par

\subsubsection{Numerical approximation accuracy of the Restriction and Lifting operators.}
\label{subsub:LOOCV}

In order to evaluate the numerical approximation accuracy of the restriction $\mathcal{R}$ operator as computed via the Nystr\"{o}m method and the lifting operator $\mathcal{L}$ as computed by coupling DMs with GHs and k-NN, we used the following leave-out-one cross-validation (LOOCV) procedure.

For the restriction operator $\mathcal{R}$, we considered the DMs embeddings in Eq.~\eqref{eq:Rest1} and removed every time one observation $\mathbf{x}_m$ from the high-dimensional space and its corresponding low-dimensional image $\mathbf{y}_m=\mathcal{R}(\mathbf{x}_m)$; we denote the reduced-by one point-data set by $\mathbf{X}-{\{\mathbf{x}_m\}}$ and its corresponding DMs embedding, by $\mathcal{R}_{\mathbf{X}-{\{\mathbf{x}_m\}}}$.
~Using $\mathcal{R}_{\mathbf{X}-{\{\mathbf{x}_m\}}}$, we apply the Nystr\"{o}m method according to Eq.~\eqref{eq:Rest2} considering $\mathbf{x}_m$ as the new unseen point and compare the resulting $\mathcal{R}(\mathbf{x}_m)$ with the ground truth $\mathbf{y}_m$, $\forall m=1,\ldots,M$.~We assessed the approximation error in terms of $L_1$, $L_2$ and $L_{\infty}$ norms and also reported the absolute and relative error with respect to the ground truth DMs embeddings.\par 
For the lifting operator $\mathcal{L}$, we considered the low-dimensional image provided by the DMs embeddings in Eq.~\eqref{eq:Lift1} and removed every time one low-dimensional image $\mathbf{y}_m$ from the DMs embedding and the corresponding observation $\mathbf{x}_m=\mathcal{L}(\mathbf{y}_m)$; we denote the reduced-by one point-DMs embedding by $\mathbf{Y}-{\{\mathbf{y}_m\}}$ and its corresponding pre-image by $\mathcal{L}_{\mathbf{Y}-{\{\mathbf{y}_m\}}}$.~Using $\mathcal{L}_{\mathbf{Y}-{\{\mathbf{y}_m\}}}$, we employed both GHs and k-NN according to Eqs.~\eqref{eq:LiftGH} and \eqref{eq:LiftkNN}, respectively, considering $\mathbf{y}_m$ as the new unseen point.~We then compared the resulting $\mathcal{L}(\mathbf{y}_m)$ with the ground truth data points in the high-dimensional space $\mathbf{x}_m$, $\forall m=1,\ldots,M$.~We again assessed the approximation error in terms of $L_1$, $L_2$ and $L_{\infty}$ norms in the high-dimensional space.~For visualization purposes (plotting in the low-dimensional space), we additionally employed the restriction operator for calculating the absolute and relative error of $\mathcal{R}(\mathcal{L}_{\mathbf{Y}- \mathbf{y}_m})$ with respect to the ground truth DMs embeddings $\mathbf{y}_m=\mathcal{R}(\mathbf{x}_m)$.~For the cases under study, we report the norm errors of both the restriction and lifting operators in the Supplementary Material, where we also include the visualization of the absolute and relative errors.\par
%
%
%
%
\subsection{Step B. Data-driven Equation/Variable-free numerical bifurcation analysis}
\label{sub:EF_DMs}
Given the data-driven identification of the coarse-grained variables via DMs and the construction of the lifting (via GHs or k-NN) and restriction (via the Nystr\"{o}m method) operators, we have put all the pieces together for the construction of the \emph{coarse-timestepper} which is the core of EF multiscale computations defined on the low-dimensional manifold. Based on this, we perform a numerical bifurcation analysis in order to track branches of steady-state-solutions and extract the numerical quantities needed for the design of controllers \citep{siettos2004coarse,armaou2004time,siettos2006equation,siettos2012equation}.
Equation-free computations rely on the fact that many complex/multiscale dynamical systems exhibit a collective/emergent behavior that can be described by just a few coarse-grained variables describing the dynamics at a macroscopic level \citep{kevrekidis2003equation,kevrekidis2009equation}.~Let us consider a high, say $N$-dimensional dynamical system describing the microscopic evolution of $N$ state variables.~We assume that there exists a low, say $D\ll N$-dimensional slow invariant manifold $\mathcal{M}$, which attracts all the higher-order moments of the microscopic distribution.
Within the EF framework, one computes the necessary macroscopic numerical quantities required for the system-level analysis ``on demand'' on $\mathcal{M}$, by short bursts of appropriately initialized high-dimensional simulations of the agent-based/microscopic model \citep{kevrekidis2003equation,liu2015equation,theodoropoulos2000coarse,kevrekidis2004equation,siettos2012equation,kevrekidis2009equation}.~The key elements here are the restriction, $\mathcal{R}$, and lifting, $\mathcal{L}$, operators.
Ideally, the composition $\mathcal{R} \circ \mathcal{L}$ should map to the identity map $\mathbf{I}_{\mathbb{R}^D}$.\par
\begin{figure}[!hb]
    \centering
    \includegraphics[scale=0.5]{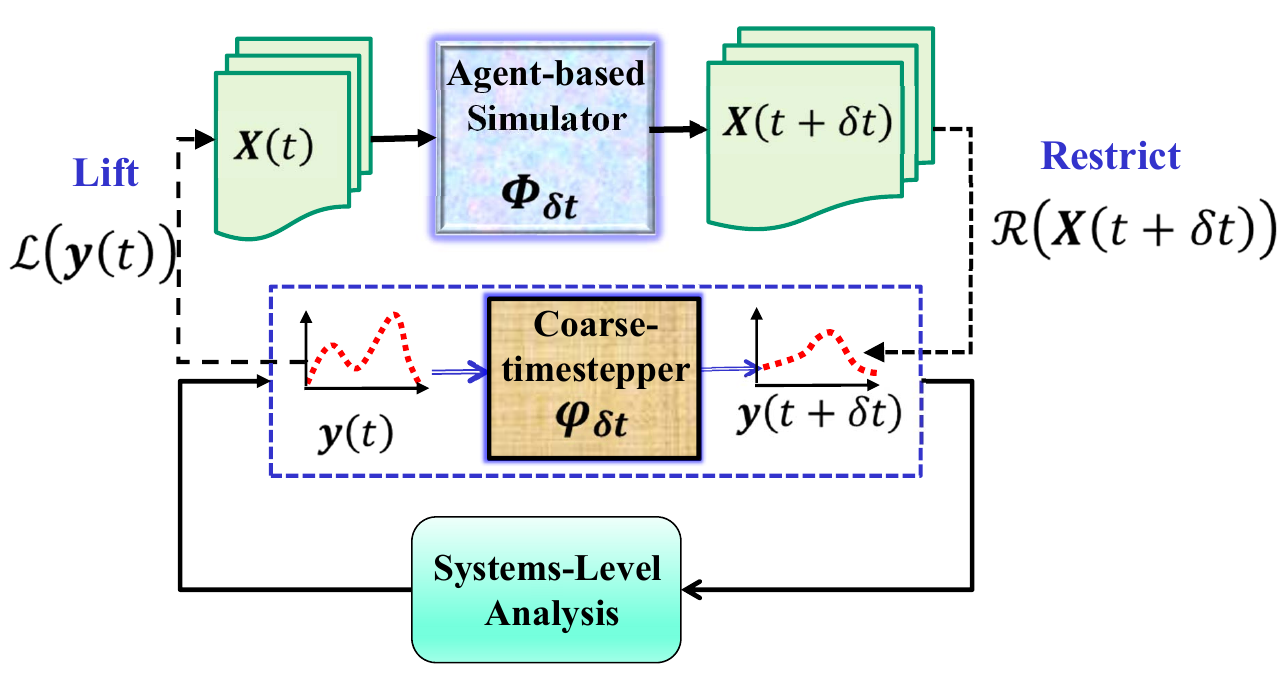} 
    \caption{A schematic of the Equation-free multiscale framework.}
    \label{fig:EFschematic}
\end{figure}
The core of EF analysis is the \emph{coarse-timestepper} \citep{kevrekidis2003equation,kevrekidis2004equation}, which is a ``black-box'' evolution operator of the coarse-scale dynamics on the low-dimensional manifold $\mathcal{M}$.~It essentially constructs a discrete map of the coarse-scale variables  over a macroscopic short time period $\delta t$ by using the lifting and restriction operators to bridge the emergent dynamics with the microscopic scale. The \emph{coarse-timestepper} consists of the following steps (see also Fig.~\ref{fig:EFschematic} for a schematic representation): 
\begin{enumerate} 
    \item Given a macroscopic representation $\mathbf{y}(t) \in \mathbb{R}^D$, map using the lifting operator to a consistent microscopic realization $\mathbf{x}(t) \in \mathbb{R}^N$: $\mathbf{x}(t) = \mathcal{L}(\mathbf{y}(t))$.
    \item Evolve the microscopic realization with the microscopic simulator (agent based, ODE, etc.) for a small macroscopic burst $\delta t$: $$\mathbf{x}(t+\delta t) = \boldsymbol{\Phi}_{\delta t} \left( \mathcal{L}(\mathbf{y}(t)),\boldsymbol{p}(t) \right),$$ where $\boldsymbol{\Phi}_{\delta t}:\mathbb{R}^N\times \mathbb{R}^q\rightarrow \mathbb{R}^N$ denotes the evolution operator of the microscopic simulator, $\boldsymbol{p}\in \mathbb{R}^q$ denotes the vector of parameters including bifurcation parameters, say $\lambda$ and control variables, say $\boldsymbol{u}$.
    \item Map back to the macroscopic level through the restriction operator: $\mathbf{y}(t+\delta t) = \mathcal{R}\left( \mathbf{x}(t+\delta t) \right)$,
\end{enumerate}
thus resulting to
\begin{equation}
 \mathbf{y}(t+\delta t) = \mathcal{R}\left(  \boldsymbol{\Phi}_{\delta t} \left( \mathcal{L}(\mathbf{y}(t)),\boldsymbol{p}(t) \right) \right) \equiv \boldsymbol{\phi}_{\delta t } (\mathbf{y}(t),\boldsymbol{p}(t)),
 \label{eq:CTS}
\end{equation}
where $\boldsymbol{\phi}_{\delta t}:\mathbb{R}^D\times \mathbb{R}^q\rightarrow \mathbb{R}^D$ denotes the unknown explicitly, coarse-grained evolution operator defined on the low-dimensional manifold.~Note that the macroscopic time step $\delta t$ should be long enough in order to allow the microscopic simulations to \textit{heal}; i.e., the microscopic systems' lifted realization to return to the neighborhood of the manifold $\mathcal{M}$.    

%
%
%
In order to allow for the coarse-grained macroscopic system to \textit{heal}, the \emph{coarse-timestepper} in Eq.~\eqref{eq:CTS} is modified as \cite{marschler2014coarse}:
\begin{equation}
    \mathbf{y}(t_{skip}+\delta; \lambda) = \mathcal{R}(\boldsymbol{\Phi}_{\delta}(\mathcal{L}(\mathbf{y}(t_{skip};\lambda)),\boldsymbol{u}^*))=\boldsymbol{\phi}_{\delta}(\mathbf{y}(t_{skip}; \lambda),\boldsymbol{u}^*),
    \label{eq:CTS_imp}
\end{equation}
where $t_{skip}$ is the additional healing step that allows for transients to decay and $\delta$ is the time step to record any changes in the potentially examined stationary points.~Being interested in bifurcation analysis, we explicitly denote by $\lambda$ the bifurcation parameter and consider the control variables in $\mathbf{p}(t)$ constant at $\mathbf{u}^*$.\par
Hence, for tracing the coarse-grained stationary points, one needs to track the solution branches of:
\begin{equation}
    \mathbf{F} (\mathbf{y}, \lambda) \equiv \mathbf{y} - \boldsymbol{\phi}_{\delta}(\mathbf{y}, \lambda) =  0,  \label{eq:BifEq}
\end{equation}
in which we drop the dependency on $t_{skip}$ and $\mathbf{u}^*$ and explicitly denote that of the bifurcation parameter $\lambda$.\par
Here, we employ an equation-free pseudo-arc-length continuation and augment Eq.~\eqref{eq:BifEq} with the  condition:
\begin{equation}
N(\mathbf{y}, \lambda) \equiv \dfrac{(\mathbf{y}_1-\mathbf{y}_0)^T}{\delta \lambda} \cdot (\mathbf{y}-\mathbf{y}_1) + \dfrac{(\lambda_1-\lambda_0)}{\delta \lambda} \cdot (\lambda-\lambda_1) - \delta \lambda = 0,
\label{eq:ContEq}
\end{equation}
where $(\mathbf{y}_1, \lambda_1)$ and $(\mathbf{y}_0, \lambda_0)$ are two previous computed solutions and $\delta \lambda$ is the pseudo-arc-length step size.~The $(D+1)$-dim. system of Eqs.~(\ref{eq:BifEq}, \ref{eq:ContEq}) essentially constitutes a predictor-corrector scheme, which is solved by using the Newton's method on the correction step.~The Newton's iterative update of the $k$-th iteration reads:
\begin{equation*}
    (\mathbf{y}^{k+1}, \lambda^{k+1})^T = (\mathbf{y}^k, \lambda^k)^T - \mathbf{J}^{-1} \cdot [\mathbf{F} (\mathbf{y}^k, \lambda^k), N(\mathbf{y}^k, \lambda^k) ]^T,
\end{equation*}
where the Jacobian $\mathbf{J} \in \mathbb{R}^{(D+1) \times (D+1)}$ 
\begin{equation}
    \mathbf{J} = \begin{bmatrix} \mathbf{F}_{\mathbf{y}}(\mathbf{y}, \lambda) & \mathbf{F}_{\lambda}(\mathbf{y}, \lambda) \\ \dfrac{(\mathbf{y}_1-\mathbf{y}_0)^T}{\delta \lambda} & \dfrac{\lambda_1-\lambda_0}{\delta \lambda} \end{bmatrix}_{(\mathbf{y}^k,\lambda^k)}
    \label{eq:J_NR}
\end{equation}
is estimated numerically, e.g. via finite differences with the use of the \emph{coarse-timestepper} in Eq.~\eqref{eq:CTS_imp} as follows:
\begin{equation}
    \mathbf{F}_{\mathbf{y}} (\mathbf{y}, \lambda) = \mathbf{I} - \dfrac{\boldsymbol{\phi}_{\delta}(\mathbf{y}+\Delta \mathbf{y}, \lambda)-\boldsymbol{\phi}_{\delta}(\mathbf{y}, \lambda)}{\Delta \mathbf{y}}, \qquad \mathbf{F}_{\lambda}(\mathbf{y}, \lambda) = - \dfrac{\boldsymbol{\phi}_{\delta}(\mathbf{y}, \lambda+\Delta \lambda)-\boldsymbol{\phi}_{\delta}(\mathbf{y}, \lambda)}{\Delta \lambda}
    \label{eq:EF_FD}
\end{equation}
considering small perturbations $\Delta \mathbf{y}$ and $\Delta \lambda$ in each element of the coarse variable and the bifurcation parameter, respectively.\par
Here, we re-iterate that in the above procedure, all the necessary numerical quantities required for the numerical bifurcation analysis (coarse-grained steady-states, stability, Jacobians) are provided \textit{on} the manifold parametrized by DMs embeddings $\mathbf{y}_i \in \mathbb{R}^D$, though the \emph{coarse-timestepper} with the appropriate lifting and restriction operators, described in Section~\ref{sub:DM}.\par
%
%
%
\subsection{Step C. Design and implementation of embedded wash-out controllers}
Having performed the numerical bifurcation analysis, we can now proceed with the design of controllers for the stabilization of \textit{regular} unstable steady-states. Here, we design linear \textit{dynamic state feedback} wash-out controllers \citep{panagiotopoulos2022continuation,abed1994stabilization,siettos2012equation} 
based on the numerical quantities extracted by the \emph{coarse-timestepper} around the steady-state that we seek to stabilize. We note that in general, the \emph{coarse-timestepper} defined by Eq.(\ref{eq:CTS}) provides a numerical ``on demand'' approximation of the actual evolution operator of the detailed high-dimensional agent-based dynamics on the manifold around the actual embedded steady-states. Assuming that we have correctly identified the actual intrinsic dimension of the manifold and a set of coarse-grained variables that parametrize it, the modelling uncertainty is mainly due to the numerical errors introduced by the implementation of the restriction and lifting operators.~In general, the coarse-grained steady-states of the agent-based simulator computed by the coarse-grained bifurcation analysis with the EF approach are an approximation of the actual ones. Hence, in order to be able to drive the detailed agent-based simulator at its own actual coarse-grained open-loop unstable states, one has to take into account this modelling uncertainly. Thus, to increase accuracy of the numerical approximation  of the ``local'' manifold around the steady-state of interest, we used again DMs and reconstructed the lifting and restriction operators on a denser data set over the control variable space produced through the agent-based simulator.\par
At this point, we demonstrate the following theorem on the robustness of the proposed control scheme subject to modelling uncertainties and numerical approximation errors.
\begin{theorem}
Let $\mathcal{C} \subset \mathbb{R}^D$ be the set of all \textit{regular} points $\boldsymbol{y}_c$ in the manifold $\mathcal{M}$ parametrized by the DMs coordinates, from which the actual open-loop coarse-grained embedded equilibrium, say $\left( \boldsymbol{y}^*,\boldsymbol{u}^*\right)$ is controllable, i.e. starting from $\boldsymbol{y}_c \in\mathcal{C}$ one can design a controller that can bring the system to $\left( \boldsymbol{y}^*,\boldsymbol{u}^*\right)$ at a finite time. Let us denote by $\mathcal{C}_0 \subset \mathcal{C}$ the subset of all points around $\left( \boldsymbol{y}^*,\boldsymbol{u}^*\right)$ and by $(\mathbf{y}^0,\boldsymbol{u}^*)\in \mathcal{C}_0$ the numerical approximation of $\left( \boldsymbol{y}^*,\boldsymbol{u}^*\right)$ as computed through the constructed coarse-timestepper.\par
Let 
\begin{equation}
    \hat{\mathbf{y}}_{k+1}=\mathbf{A} \hat{\mathbf{y}}_k + \mathbf{B} \hat{\boldsymbol{u}}_k
    \label{eq:CSopen}
\end{equation}
be the linearized system around $(\mathbf{y}^0,\boldsymbol{u}^*)$ as computed with the Equation-free approach through the coarse-timestepper; $\hat{\mathbf{y}}_k=\mathbf{y}_k - \mathbf{y}^0 \in \mathcal{C}_0$, $\hat{\boldsymbol{u}}_k = \boldsymbol{u}_k-\boldsymbol{u}^*$, $\mathbf{A} = \left( \partial \boldsymbol{\phi}_{\delta t }(\mathbf{y}_k, \boldsymbol{u}_k) \right)/ \left( \partial \mathbf{y}_k \right)\big|_{(\mathbf{y}^0, \boldsymbol{u}^*)}$ is the $D\times D$ Jacobian matrix of the system as computed around the equilibrium $(\mathbf{y}^0,\boldsymbol{u}^*)$ and $\mathbf{B} = \left( \partial \boldsymbol{\phi}_{\delta t } (\mathbf{y}_k, \boldsymbol{u}_k) \right)/ \left( \partial \boldsymbol{u}_k \right)\big|_{(\mathbf{y}^0, \boldsymbol{u}^*)}$ is the corresponding $D\times q$ control matrix.\\
Let
\begin{equation}
    \hat{\mathbf{y}}'_{k+1}=\overline{\mathbf{A}} \hat{\mathbf{y}}'_k + \overline{\mathbf{B}} \hat{\boldsymbol{u}}_k
    \label{eq:CSnominal}
\end{equation}
be the actual linearized system around $\left( \boldsymbol{y}^*,\boldsymbol{u}^*\right)$;
$\hat{\mathbf{y}}'_k=\mathbf{y}_k - \mathbf{y}^*= \mathbf{\hat{y}}_{k}+\boldsymbol{\epsilon}\in \mathcal{C}_0$, $\hat{\boldsymbol{u}}_k = \boldsymbol{u}_k-\boldsymbol{u}^*$, $\overline{\mathbf{A}}=\mathbf{A}+\boldsymbol{\Delta}_A$, $\overline{\mathbf{B}}=\mathbf{B}+\boldsymbol{\Delta}_B$, with $\boldsymbol{\Delta}_A, \boldsymbol{\Delta}_B$ representing the modelling uncertainty parts with respect to the actual embedded Jacobian matrix and control matrix, respectively.\\
Then, the controller given by:
\begin{align}
    \boldsymbol{u}_k = \boldsymbol{u}^* + \mathbf{K}^T(\mathbf{y}_k - \mathbf{y}^0) + \mathbf{D} \mathbf{w}_k,\label{eq:WOf1}\\
    \mathbf{w}_{k+1} =\mathbf{w}_k +  (\boldsymbol{u}_k -\boldsymbol{u}^*),
    \label{eq:WOf2}
\end{align}
where $\mathbf{w}_k \in \mathbb{R}^q$ 
``washes-out'' the modelling uncertainty and drives any initial condition $\boldsymbol{y}_c\in \mathcal{C}_0$ to the actual embedded equilibrium $\left( \boldsymbol{y}^*,\boldsymbol{u}^*\right)$ under appropriate selection of the values of the gains  $\mathbf{K} \in \mathbb{R}^{D\times q}$, $\mathbf{D} \in \mathbb{R}^{q\times q}$. We also assume that (a) the closed-loop Jacobian matrix, say $\boldsymbol{A}_C$ is diagonalizable, and (b) the modelling uncertainty is such that $||\boldsymbol{\Delta}_A + \boldsymbol{\Delta}_B\mathbf{K}^T||_1<\delta(\boldsymbol{\Delta}_A,\boldsymbol{\Delta}_B)\ll 1$.
\end{theorem}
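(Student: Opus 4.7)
The plan is to argue in two stages. First, an algebraic/structural stage showing that the wash-out integrator Eq.~\eqref{eq:WOf2} forces \emph{any} closed-loop fixed point of the perturbed plant to satisfy $\boldsymbol{u}_k=\boldsymbol{u}^*$, so the only closed-loop equilibrium is the actual open-loop one $(\boldsymbol{y}^*,\boldsymbol{u}^*)$, independently of the uncertainties $\boldsymbol{\Delta}_A,\boldsymbol{\Delta}_B$. Second, a local-stability stage using the diagonalizability of the nominal closed-loop matrix $\mathbf{A}_C$ together with the hypothesis $\|\boldsymbol{\Delta}_A+\boldsymbol{\Delta}_B\mathbf{K}^T\|_1<\delta\ll 1$ to preserve Schur stability of the perturbed matrix $\overline{\mathbf{A}}_C$ via Bauer--Fike.

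For the first stage, I would substitute Eqs.~\eqref{eq:WOf1}--\eqref{eq:WOf2} into the true plant~\eqref{eq:CSnominal}, use $\hat{\boldsymbol{y}}_k=\hat{\boldsymbol{y}}'_k-\boldsymbol{\epsilon}$ with $\boldsymbol{\epsilon}=\mathbf{y}^0-\mathbf{y}^*$, and assemble the $(D+q)$-dimensional augmented recursion
\begin{equation*}
\begin{bmatrix}\hat{\boldsymbol{y}}'_{k+1}\\ \mathbf{w}_{k+1}\end{bmatrix}=\overline{\mathbf{A}}_C\begin{bmatrix}\hat{\boldsymbol{y}}'_k\\ \mathbf{w}_k\end{bmatrix}-\begin{bmatrix}\overline{\mathbf{B}}\mathbf{K}^T\\ \mathbf{K}^T\end{bmatrix}\boldsymbol{\epsilon},\qquad \overline{\mathbf{A}}_C=\begin{bmatrix}\overline{\mathbf{A}}+\overline{\mathbf{B}}\mathbf{K}^T & \overline{\mathbf{B}}\mathbf{D}\\ \mathbf{K}^T & \mathbf{I}+\mathbf{D}\end{bmatrix}.
\end{equation*}
At a fixed point $(\boldsymbol{y}^e,\mathbf{w}^e)$ the $\mathbf{w}$-block forces $\mathbf{D}\mathbf{w}^e=\mathbf{K}^T(\boldsymbol{\epsilon}-\boldsymbol{y}^e)$; eliminating $\mathbf{w}^e$ from the $\boldsymbol{y}$-block collapses it to $(\overline{\mathbf{A}}-\mathbf{I})\boldsymbol{y}^e=\mathbf{0}$, whose unique solution is $\boldsymbol{y}^e=\mathbf{0}$ because $\mathbf{y}^*$ is a \emph{regular} equilibrium of the open-loop coarse-timestepper and hence $1\notin\sigma(\overline{\mathbf{A}})$. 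Consequently $\mathbf{w}^e=\mathbf{D}^{-1}\mathbf{K}^T\boldsymbol{\epsilon}$ and $\mathbf{y}_k\to\mathbf{y}^*$. The structural reason is that Eq.~\eqref{eq:WOf2} enforces $\boldsymbol{u}_k=\boldsymbol{u}^*$ at every steady state, so the plant must rest at the true open-loop equilibrium driven by the nominal input, with $\mathbf{w}$ silently absorbing the identification offset $\boldsymbol{\epsilon}$.

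For the second stage, decompose $\overline{\mathbf{A}}_C=\mathbf{A}_C+\boldsymbol{\Delta}_C$, where $\mathbf{A}_C$ is the analogous matrix built from the Equation-free--identified $(\mathbf{A},\mathbf{B})$ of Eq.~\eqref{eq:CSopen} and $\boldsymbol{\Delta}_C=\bigl[\begin{smallmatrix}\boldsymbol{\Delta}_A+\boldsymbol{\Delta}_B\mathbf{K}^T & \boldsymbol{\Delta}_B\mathbf{D}\\ \mathbf{0}&\mathbf{0}\end{smallmatrix}\bigr]$. The gains $(\mathbf{K},\mathbf{D})$, obtained by LQR on the augmented nominal pair, make $\rho(\mathbf{A}_C)<1$. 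Under the diagonalization $\mathbf{A}_C=\mathbf{V}\boldsymbol{\Lambda}\mathbf{V}^{-1}$, the Bauer--Fike theorem gives $\min_{\lambda\in\sigma(\mathbf{A}_C)}|\mu-\lambda|\le\kappa_1(\mathbf{V})\,\|\boldsymbol{\Delta}_C\|_1$ for every $\mu\in\sigma(\overline{\mathbf{A}}_C)$; since $\|\boldsymbol{\Delta}_C\|_1=\max\bigl(\|\boldsymbol{\Delta}_A+\boldsymbol{\Delta}_B\mathbf{K}^T\|_1,\|\boldsymbol{\Delta}_B\mathbf{D}\|_1\bigr)$, choosing the threshold $\delta(\boldsymbol{\Delta}_A,\boldsymbol{\Delta}_B)<\kappa_1(\mathbf{V})^{-1}(1-\rho(\mathbf{A}_C))$ (and encoding in it the bound on $\|\boldsymbol{\Delta}_B\mathbf{D}\|_1$) keeps $\overline{\mathbf{A}}_C$ Schur. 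Standard discrete-time linear theory then yields local exponential convergence on $\mathcal{C}_0$ to the equilibrium identified in the first stage.

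The main obstacle is twofold and lives inside the second stage: the hypothesis bounds only the $(1,1)$-block of $\boldsymbol{\Delta}_C$, so one must read $\delta(\boldsymbol{\Delta}_A,\boldsymbol{\Delta}_B)$ as implicitly encoding also a smallness bound on the off-diagonal $\boldsymbol{\Delta}_B\mathbf{D}$ (typically harmless, since wash-out integral gains $\mathbf{D}$ are small); and one must control the condition number $\kappa_1(\mathbf{V})$, which for a strongly non-normal $\mathbf{A}_C$ can blow up and shrink the admissible $\delta$ to a useless range. If either obstacle bites, I would replace Bauer--Fike by a discrete Lyapunov argument: since $\mathbf{A}_C$ is Schur, for any $\mathbf{Q}\succ\mathbf{0}$ there exists a unique $\mathbf{P}\succ\mathbf{0}$ with $\mathbf{A}_C^T\mathbf{P}\mathbf{A}_C-\mathbf{P}=-\mathbf{Q}$, and one verifies $\overline{\mathbf{A}}_C^T\mathbf{P}\overline{\mathbf{A}}_C-\mathbf{P}\prec\mathbf{0}$ whenever $\|\boldsymbol{\Delta}_C\|$ lies below an explicit threshold in terms of $\lambda_{\min}(\mathbf{Q})$, $\|\mathbf{P}\|$ and $\|\mathbf{A}_C\|$. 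This bypasses diagonalizability entirely and pinpoints the role of Step~C's local DMs re-training around $(\mathbf{y}^0,\boldsymbol{u}^*)$: it shrinks $\|\boldsymbol{\Delta}_A\|$ and $\|\boldsymbol{\Delta}_B\|$ so that whichever admissibility bound one uses is satisfied in practice.
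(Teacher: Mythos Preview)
Your proposal is correct and follows essentially the same route as the paper: assemble the augmented closed-loop recursion, choose $(\mathbf{K},\mathbf{D})$ so that the nominal $\mathbf{A}_C$ is Schur, invoke Bauer--Fike via the diagonalizability assumption to keep $\overline{\mathbf{A}}_C=\mathbf{A}_C+\boldsymbol{\Delta}_{A_C}$ Schur, and then read off from the $\mathbf{w}$-row at steady state that $\hat u=0$ so the fixed point is the true $(\mathbf{y}^*,\boldsymbol{u}^*)$ with $\mathbf{w}^*=\mathbf{D}^{-1}\mathbf{K}^T\boldsymbol{\epsilon}$. Your derivation is in fact slightly more explicit than the paper's in spelling out $(\overline{\mathbf{A}}-\mathbf{I})\boldsymbol{y}^e=\mathbf{0}$ and invoking regularity to force $\boldsymbol{y}^e=\mathbf{0}$, and your observation that the stated hypothesis bounds only the $(1,1)$-block of $\boldsymbol{\Delta}_{A_C}$ (leaving $\boldsymbol{\Delta}_B\mathbf{D}$ to be absorbed into the smallness threshold) is a valid reading of an imprecision the paper does not address.
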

\begin{proof}
Without loss of generality, we consider the case of $q=1$. Inserting the control law given by Eqs.~(\ref{eq:WOf1},\ref{eq:WOf2}) into Eq.~\eqref{eq:CSopen} one gets the closed-loop system:
\begin{equation}
    \begin{bmatrix} \mathbf{\hat{y}}_{k+1} \\ w_{k+1} \end{bmatrix} = \begin{bmatrix} \mathbf{A} & \mathbf{0} \\ \mathbf{0} & 1  \end{bmatrix} \cdot \begin{bmatrix} \mathbf{\hat{y}}_k \\ w_k \end{bmatrix} + \begin{bmatrix} \mathbf{B} \\ 1 \end{bmatrix} \hat{u}_{k} = \boldsymbol{A}_C \cdot \begin{bmatrix} \mathbf{\hat{y}}_k \\ w_k \end{bmatrix}, \qquad \boldsymbol{A}_C=\begin{bmatrix} \mathbf{A} + \mathbf{B}\mathbf{K}^T & \mathbf{B} D \\ \mathbf{K}^T & 1+D  \end{bmatrix}
    \label{eq:augCS} 
\end{equation}
Assuming that $(\mathbf{y}^*,\boldsymbol{u}^*)$ (and its approximation) is controllable, one can select the gains $\boldsymbol{K},D$ (e.g. using pole placement techniques or/optimal linear control) so that all the moduli of the eigenvalues of the closed-loop system (\ref{eq:augCS}) are within the unit circle.
This implies that the spectral radius of the matrix $\boldsymbol{A}_C$ is less than one, i.e.,
\begin{equation}
\rho(\boldsymbol{A}_C)\equiv\max\{|\lambda_1|, |\lambda_2|, \dots, |\lambda_D|\}=1-\delta(\boldsymbol{A}_C)<1.
\label{eq:spectralradius}
\end{equation}
The diagonalizability of $\boldsymbol{A}_C$ implies that $\exists \boldsymbol{V}$ that is an invertible matrix such that $\boldsymbol{A}_C=\boldsymbol{V}\boldsymbol{\Lambda}\boldsymbol{V}^{-1}$, where $\boldsymbol{\Lambda}$ it the diagonal matrix with the eigenvalues $\lambda_i,i=1,\dots,D$ of $ \boldsymbol{A}_C$. 
Now by applying the control law given by Eqs.~(\ref{eq:WOf1},\ref{eq:WOf2}) into Eq.~\eqref{eq:CSnominal}, we get the \textit{actual} embedded closed-loop system
\begin{align}
    \begin{bmatrix} \mathbf{\hat{y}}'_{k+1} \\ w_{k+1} \end{bmatrix} = \begin{bmatrix} \overline{\mathbf{A}} & \mathbf{0} \\ \mathbf{0} & 1  \end{bmatrix} \cdot \begin{bmatrix} \mathbf{\hat{y}}'_k \\ w_k \end{bmatrix} + \begin{bmatrix} \overline{\mathbf{B}} \\ 1 \end{bmatrix} \hat{u}_{k} = \left ( \boldsymbol{A}_C +\boldsymbol{\Delta}_{A_C}\right )\cdot \begin{bmatrix} \mathbf{\hat{y}}'_k \\ w_k \end{bmatrix}-\begin{bmatrix} (\mathbf{B}+\boldsymbol{\Delta}_{B})\mathbf{K}^T \\ \mathbf{K}^T \end{bmatrix}\boldsymbol{\epsilon}, \label{eq:augCSnominal0} \\
    \boldsymbol{\Delta}_{A_C}=\begin{bmatrix} \boldsymbol{\Delta}_A + \boldsymbol{\Delta}_B\mathbf{K}^T & \boldsymbol{\Delta}_B D \\ \mathbf{0} & 0  \end{bmatrix}.
    \label{eq:augCSnominal} 
\end{align}
According to the Theorem of Bauer-Fike \cite{bauer1960norms}, $\forall \mu$ eigenvalue of the perturbed matrix $\boldsymbol{A}_C +\boldsymbol{\Delta}_{A_C}$, $\exists \lambda_j \in \boldsymbol{\Lambda}$ such that the following inequality is satisfied:
\begin{equation}
    |\mu-\lambda_j|\le ||\mathbf{V}||_p ||\mathbf{V}^{-1}||_p ||\boldsymbol{\Delta}_{A_C}||_p.
    \label{eq:bauer}
\end{equation}
The above inequality defines the distance between the eigenvalues of $\boldsymbol{A}_C$ and those of $\boldsymbol{A}_C +\boldsymbol{\Delta}_{A_C}$.\par
Let us denote by $\rho(\boldsymbol{A}_C+\boldsymbol{\Delta}_{A_C})\equiv\max\{|\mu_1|, |\mu_2|, \dots, |\mu_D|\}$ the spectral radius of the matrix $\boldsymbol{A}_C+\boldsymbol{\Delta}_{A_C}$.\par
A corollary of the Bauer-Fike Theorem \cite{bauer1960norms} theorem in Eq. (\ref{eq:bauer}) is that 
\begin{equation}
   \rho(\boldsymbol{A}_C+\boldsymbol{\Delta}_{A_C}) < \rho(\boldsymbol{A}_C)+||\mathbf{V}||_p ||\mathbf{V}^{-1}||_p ||\boldsymbol{\Delta}_{A_C}||_p,
    \label{eq:bauercol}
\end{equation}
which, by substituting Eq.~(\ref{eq:spectralradius}) implies
\begin{equation}
   \rho(\boldsymbol{A}_C+\boldsymbol{\Delta}_{A_C}) < 1-\delta(\boldsymbol{A}_C)+||\mathbf{V}||_p ||\mathbf{V}^{-1}||_p ||\boldsymbol{\Delta}_{A_C}||_p.
    \label{eq:bauercol2}
\end{equation}
Thus, for satisfying that $\rho(\boldsymbol{A}_C+\boldsymbol{\Delta}_{A_C})<1$, we have that $||\boldsymbol{\Delta}_{A_C}||_p$ has to be bounded as:
\begin{align}
||\boldsymbol{\Delta}_{A_C}||_p<\frac{\delta(\mathbf{A}_C)}{||\mathbf{V}||_p ||\mathbf{V}^{-1}||_p}.
 \label{eq:stabcond}
\end{align}
Now, by the assumption that $\left( \boldsymbol{y}^*,\boldsymbol{u}^*\right)$ is controllable, from Eq.(\ref{eq:augCSnominal0},(\ref{eq:augCSnominal}) at the steady-state, we have that
\begin{align}
w^{*}=w^{*}+\boldsymbol{K}^T\mathbf{\hat{y}}'+D w^{*}-\boldsymbol{K}^T\boldsymbol{\epsilon}, \quad \mbox or \quad
\boldsymbol{K}^T\mathbf{\hat{y}}'+D w^{*}-\boldsymbol{K}^T\boldsymbol{\epsilon}=0, \quad
\mbox or \quad
\boldsymbol{K}^T(\mathbf{\hat{y}}'-\boldsymbol{\epsilon})+D w^{*}=0 \quad \mbox or \nonumber \\ \boldsymbol{K}^T\mathbf{\hat{y}}+D w^{*}=0.
\label{eq:zerow}
\end{align}
Hence, from Eq.~(\ref{eq:WOf1}), Eq.~(\ref{eq:zerow}) implies that at steady-state, we have that $\hat{u}=0$, i.e.  $u=u^*$.
Therefore, as we consider the stabilization of regular embedded equilibria, the closed-loop system converges to the actual coarse-grained open-loop unstable equilibrium, i.e., $\boldsymbol{y}\rightarrow  \boldsymbol{y}^*$; thus the modelling uncertainty is washed-out and ``absorbed'' in the variable $w$, which at steady state takes the value
\begin{equation}
w^*=-\frac{1}{D}\boldsymbol{K}^T(\mathbf{\hat{y}}'-\boldsymbol{\epsilon})=-\frac{1}{D}\boldsymbol{K}^T\mathbf{\boldsymbol{\hat{y}}}=-\frac{1}{D}\boldsymbol{K}^T(\mathbf{\boldsymbol{y}^*-\boldsymbol{y}^0})=\frac{1}{D}\boldsymbol{K}^T\mathbf{\boldsymbol{\epsilon}}. 
\end{equation}
\end{proof}
Here, for the computation of the gain matrices $\mathbf{K}$ and $D$, we employed a discrete linear quadratic regulator (LQR) technique such that the state feedback control law in Eq.~\eqref{eq:WOf1} minimizes the cost function $J=\sum_{k=0}^\infty \left( \mathbf{y}_k^T \mathbf{Q} \mathbf{y}_k + u_k R u_k \right)$ subject to the state dynamics, where $\mathbf{Q}=0.1 \mathbf{I}$ and $R=1$ (as also considered in \citep{siettos2012equation}).~Such a selection of optimal gain matrices $\mathbf{K}$ and $D$ results to a fast convergence to the coarse-grained unstable steady-states.

\section{Case study 1. Control of unstable traveling waves of an agent-based model of traffic dynamics}
\label{sec:1ring}

As a first benchmark example, we consider a deterministic agent-based simulator describing the unidirectional movement of $N$ (autonomous) cars around a one-lane ring-shaped road of length $L$.~This model has been studied in other works, including the EF bifurcation analysis of the emergent dynamics \citep{chin2022enabling,marschler2014implicit}.
~The dynamics are given by the optimal velocity model (OVM) (a simplified $N$ body simulation) given by \citep{marschler2014coarse,bando1995dynamical}:
\begin{equation}
    \tau \Ddot{x}_n + \Dot{x}_n = V(\Delta x_{n}), \qquad n=1,\ldots,N.
    \label{eq:TM_init}
\end{equation}
$x_n$ denotes the position of each car, $\tau$ the inertia of each car and $V(\cdot)$ is the optimal velocity function that models the response of car $n$.~This response depends only on the position of the car in front, i.e., the $n$-th (autonomous) car adjusts its velocity according to the headway $\Delta x_n = x_{n+1}-x_n$.~The car's movement around the ring-shaped road implies the periodic boundary condition:
\begin{equation}
    x_{n+N} = x_n + L.
    \label{eq:BC}
\end{equation}
For this model, an analytical solution exists \citep{marschler2014implicit,gaididei2009analytical}, that serves for assessing the numerical approximation accuracy of the proposed \emph{EVFML} scheme.
~Note that the system of Eq.~\eqref{eq:TM_init} can be written at its first-order form,
\begin{equation}
    \dot{x}_n = y_n, \qquad \qquad \dot{y}_n = \tau^{-1} \left( V( \Delta x_n) - y_n \right),
    \label{eq:TM}
\end{equation}
where $y_n$ is the velocity of the $n$-th car.~We choose the optimal velocity function \citep{bando1995dynamical} as: 
\begin{equation}
    V(\Delta x_n) = v_0 (tanh(\Delta x_n -h) + tanh(h)),
    \label{eq:OVM}
\end{equation}
where $v_0$ is an optimal velocity parameter related to the maximum allowed velocity $v_0(1+tanh(h))$ and $h$ is the desired safety distance between cars.~The choice of optimal velocity function in Eq.~\eqref{eq:OVM} leads to analytic solutions for the steady-states of the system in Eq.~\eqref{eq:TM} \citep{gaididei2009analytical}.\par
The system in Eq.~\eqref{eq:TM} attains two types of emergent behavior \citep{chin2022enabling,marschler2014implicit,gaididei2009analytical}, namely (a) uniform free-flow, and (b) travelling wave solutions.~For the free-flow solutions:
\begin{equation*}
    x_n(t) = (n-1) L/N + t V(L/N), \quad y_n(t) = V(L/N), \quad n=1,\ldots,N,
\end{equation*}
all cars keep the same distance $\Delta x_n = L/N$ and velocity $V(L/N)$.~In the case of travelling waves, the analytic solution is captured by the ansatz:
\begin{equation*}
    x_n(t) = x_*(n-ct), \quad y_n(t) = y_*(n-ct), \quad n=1,\ldots,N,
\end{equation*}
where $c$ is the wave speed and $x_*(\xi),~ y_*(\xi)$ are periodic functions, the analytic expressions of which are defined in \citep{gaididei2009analytical}.~The travelling wave solution gives rise to traffic jams periodically moving around the ring with speed $c$, where the majority of cars have small distances from the cars in front (small headways) and move with low velocities, while the rest of them have large headways and move with high velocities (see the travelling wave profiles in Fig.~\ref{fig:TrM_AIO}(d)).\par
In all agent-based simulations, we considered $N=20$, $\tau^{-1}=1.7$, $L=40$, with the nominal value of the safety distance set at $h=2.4$.~For the numerical solution of the coupled system of $2N$ ODEs, we used the \texttt{ode45} ODE solver of the MATLAB ode suite, with relative and absolute tolerances set to $10^{-8}$.
\subsection{\textbf{Step A}. Discovery of coarse-grained variables via Diffusion Maps and the numerical solution of the out-of-sample extension and pre-image problems}
\label{subsub:DM_TrM_Bif}
Chin et al. \cite{chin2022enabling} constructed the coarse-grained bifurcation diagram of the OVM model with the aid of the EF framework by (a) considering that the coarse-variables (standard deviation of the headways) are known beforehand as also done in \cite{marschler2014implicit}, and (b) by first applying DMs to discover a proper coarse-grained set of variables from the agent-based simulations, thus using the k-NN algorithm for the solution of the pre-image problem.\par
Here, we used DMs to discover a set of appropriate coarse-grained variables over the bifurcation parameter space and GHs for a numerical solution of the pre-image problem.~Thus, we also compared the performance of the proposed GHs scheme with the k-NN scheme in terms of the numerical approximation accuracy against the analytical results. For our computations, as in \cite{chin2022enabling}, we generated $M=5,000$ initial conditions by perturbing the free-flow solution with a sinusoidal function of amplitude $\mu$.
\begin{equation}
    x_n(0) = (n-1) L /N + \mu \sin(2\pi n/N), \quad y_n(0) = V(L/N), \quad n=1,\ldots,N.
    \label{eq:DM_IC}
\end{equation}
The value of $\mu$ was sampled from a uniform distribution in the interval $\mathcal{U}(0,4.5)$ and we numerically integrated the system of $2N$ ODEs until a time that was sampled from the uniform distribution in the interval $ \mathcal{U}(200,700)$, thus keeping just the final point of each trajectory.~Regarding the bifurcation parameter (the optimal velocity parameter $v_0$), we have sampled its values from the uniform distribution in $\mathcal{U}(0.98,1.08)$.~Hence, the data set $\mathbf{X}\in \mathbb{R}^{M \times N}$ that was fed for the DMs analysis comprises $M=5,000$ observations of $N=20$ headways, including snapshots from both free-flow and travelling wave solutions.\par
We computed the first 20 leading DMs by selecting $\epsilon=3.38$ after employing the approach described in \citep{singer2009detecting}, thus retaining a high similarity between observations ($\sim 61\%$ of the pairwise distances $d_{ij}$ is greater than $\epsilon$).~The related eigenvalues $\lambda_i$ are displayed in Fig.~\ref{fig:TrM_AIO}(a) indicating a large spectral gap between $\lambda_2$ and $\lambda_3$.
\begin{figure}[!htbp]
\centering
	\includegraphics[width=0.94\textwidth]{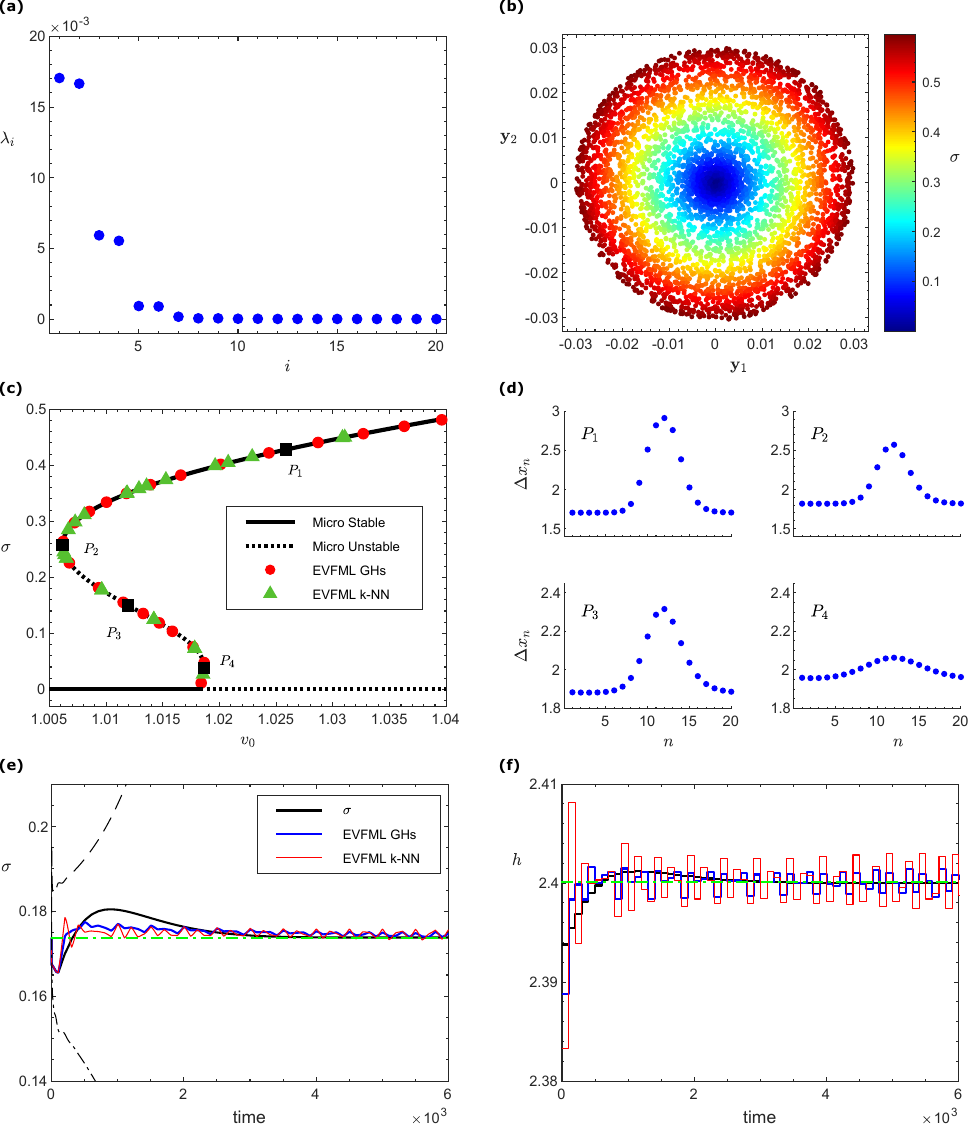}
	\caption{Agent-based traffic model. (a) The first 20 eigenvalues of the DMs coordinates over the bifurcation parameter space, and (b) the corresponding first two DMs coordinates, colored by the standard deviation $\sigma$. (c) Coarse-grained bifurcation diagram constructed through the \emph{EVFML} framework by considering the first 2 leading DMs coordinates as coarse-grained variables; for lifting, both GHs and k-NN were employed.~The analytical derived bifurcation diagram is superimposed; solid (dotted) lines denote stable (unstable) steady-states. The non-zero solution branch corresponds to traveling waves solutions, while the zero solution branch corresponds to free-flow solutions.~(d) Travelling wave profiles at selected points ($P_1$-$P_4$); notice the change in the scale of the y-axis.
	(e),(f) \emph{EVFML} Control of the agent-based traffic dynamics. Implementation of the \emph{EVFML} controller for stabilizing the coarse-grained unstable traveling wave at $v_0 =1.0099$, using $h$ as control variable.~(e) Coarse-grained open-loop and closed-loop responses in $\sigma$ coordinates, and (f) the response of the control variable $h(t)$. The \emph{EVFML} controller drives and stabilizes the emergent dynamics to the actual open loop unstable steady-state. The results obtained assuming the prior knowledge of the coarse-grained variable ($\sigma$) are also given for comparison reasons.}
	\label{fig:TrM_AIO}
\end{figure}
Thus, the first 2 leading DMs $(\mathbf{y}_1, \mathbf{y}_2)$ appear to be good/adequate candidates for parametrizing the manifold.~This assumption is validated with the available prior knowledge from \citep{marschler2014implicit,chin2022enabling}, where it was shown that the standard deviation $\sigma$ of the headways $\Delta x_n$ is an appropriate coarse-grained variable for describing the system's emergent behavior.~Figure~\ref{fig:TrM_AIO}(b) depicts the embedding of the snapshots in the $D=2$-dimensional manifold.~As it is shown: (i) the DMs coordinates $(\mathbf{y}_1, \mathbf{y}_2)$ parametrize the data set through polar coordinates, thus retrieving the underlying geometry of the ring road, and (ii) their radial direction is linearly related to the intuition-based coarse-grained variable $\sigma$.~Hence, it is computationally validated that the $D=2$-dimensional DMs embedding provides an appropriate set of coarse-grained variables to parametrize the underlying manifold.~In particular, only the radial direction is sufficient to describe the system's emergent behavior, in agreement with \citep{chin2022enabling}.\par
The numerical approximation accuracy of the restriction operator $\mathcal{R}$ (as constructed by coupling DMs with the Nystr\"{o}m method) and the lifting $\mathcal{L}$ operator (as constructed by coupling DMs with  GHs) over the bifurcation parameter space is presented in the Supplementary Material S1.1.~Using a leave-out-one cross validation (LOOCV) procedure, as described in Section~\ref{subsub:LOOCV}, it is shown that the proposed GHs scheme provides a higher numerical approximation accuracy when compared to the k-NN algorithm.

\subsection{\textbf{Step B}. Equation/Variable-free numerical bifurcation analysis}
\label{subsub:BifD_TrM}
Based on the restriction and lifting operators constructed in step A, we proceed with the construction of the \emph{coarse-timestepper} (Eq.~\eqref{eq:CTS}) with the coarse-grained variables being the DMs coordinates $(\mathbf{y}_1, \mathbf{y}_2)$.~Since the DMs coordinates lie on the disk shown in Fig.~\ref{fig:TrM_AIO}(b), we performed a polar-coordinate transformation  and considered their one-dimensional radial component as the unique coarse-grained variable.~We then ``wrapped'' around the \emph{coarse-timestepper} the pseudo-arc-length continuation scheme (see Section~\ref{sub:EF_DMs}).~Fig.~\ref{fig:TrM_AIO}(c) shows the coarse-grained bifurcation diagrams as obtained using: (i) the analytical solution, (ii) the \emph{EVFML} framework using DMs and GHs for lifting, and (ii) the \emph{EVFML} approach using DMs and the k-NN algorithm for lifting; the steady-states are displayed in $\sigma$ coordinates to allow comparison with the bifurcation diagram constructed with the analytical solution.~For the numerical simulations, we considered a healing step of $t_{skip}=300$ and an evolution time step (see Eq.~\eqref{eq:CTS_imp}) of $\delta=240$, while the continuation step sizes were set as $\delta v_0=0.005$ in the case of GHs, and $\delta v_0 = 0.0025$ in the k-NN case.
As Fig.~\ref{fig:TrM_AIO}(c) shows, the bifurcation diagrams derived on the basis of the \emph{EVFML} framework are almost identical to the analytical solution.~The free-flow steady-state solutions corresponding to $\sigma=0$ are stable for $v_0 < 1.018$ and become unstable for higher values of $v_0$.~At $v_0=1.018$, a stable travelling wave ($\sigma>0$) solution branch arises, which becomes unstable after the fold point $(v_0,\sigma) \approx (1.0186, 0.0396)$.~Another fold point exists at $(v_0,\sigma) \approx (1.0062, 0.2575)$, thus rendering the traveling wave solution branch stable once again.~After the second fold point, only stable traveling wave solutions exist, with the amplitude of the wave increasing as $v_0$ attains larger values.~This is apparent from the travelling wave profiles shown in Fig.~\ref{fig:TrM_AIO}(d) for indicative points along the traffic jam branch.~Clearly, the bifurcation behavior of the traffic jam branch gives rise to hysteresis phenomena, which have also been detected in \citep{marschler2014implicit,chin2022enabling} who used different sets of values, namely, $N=60,L=60$ and $N=30,L=60$, respectively.~However, therein, no stable travelling wave solution branch for low values of $\sigma$ was detected as in our case that we used a $N=20,L=40$ configuration (see $P_4$ in Fig.~\ref{fig:TrM_AIO}(d) for an indicative solution profile).
%
%
%
\subsection{\textbf{Step C}. Equation/Variable-free control}
\label{sub:EF_cont_TrM}
From the numerical bifurcation analysis, we now have all the  quantities required for the design of wash-out controllers on the embedded space (the Jacobian matrix $\mathbf{A}$ and the control matrix $\mathbf{B}$). For our illustrations, we have considered the stabilization of the emergent behavior at $v_0=1.0099$
(see Fig.~\ref{fig:TrM_AIO}(c)) selecting the safety distance $h$ as our control variable.~Clearly, small perturbations from the unstable steady-state lead the open-loop dynamics either towards to the upper stable solution branch of travelling waves or towards to the lower stable free-flow solution branch (dashed and dashed-dotted trajectories in Fig~\ref{fig:TrM_AIO}(e), respectively).
%
\subsubsection{Reconstruction of the low-dimensional manifold over the control parameter space}
In order to increase the numerical accuracy for the implementation of the embedded controller, we repeated the procedure of step A, this time over the control parameter space around the embedded steady-states that we seek to stabilize. Thus, we fixed $v_0$ at its nominal value and varied the control parameter $h$ around its nominal value to create a denser  data set around the steady-state of interest.~To generate initial conditions, we used  Eq.~\eqref{eq:DM_IC} and considered the amplitude $\mu$ to vary in an interval appropriate for generating trajectories close to the unstable steady-state.~For our computations, the new data set for the design of the embedded controller consisted of 25 trajectories, in $t\in[0, 1600]$ with a sampling time of $t_r = 4$, initialized with an amplitude of $\mu \in [0.2,1.8]$ every $d\mu = 0.4$ and with variation on the control parameter $h\in [2.39, 2.41]$ with $dh=0.005$.~Hence, the resulting new data set $\mathbf{X}\in \mathbb{R}^{M \times N}$ contained $M=10,000$ observations of $N=20$ headways including trajectories attracted to both the free flow and travelling wave stable steady-states.
We computed the first 20 leading DMs, this time by considering $\epsilon=1.16$.~As demonstrated in the Supplementary Material S1.2, the first 2 leading DMs $(\mathbf{y}_1, \mathbf{y}_2)$ appear to be an appropriate set of coarse-grained variables, as (i) there is a large spectral gap between $\lambda_2$ and $\lambda_3$ (see Fig.~S2(a)), and (ii) these 2 DMs coordinates retrieve the intrinsic geometry of the ring-shaped road data set with the radial direction again being linearly related to $\sigma$ (see Fig.~S2(b)).\par
The numerical approximation accuracy of the restriction $\mathcal{R}$ and the lifting $\mathcal{L}$ operators over the control parameter space was assessed using a randomly distributed subset of $5,000$ observations drawn from the new local data set $\mathbf{X}$. A comparison with the results obtained with the k-NN algorithm is also provided.~As demonstrated in the Supplementary Material S1.2, following a LOOCV, the proposed GHs scheme results to a higher numerical approximation accuracy compared to the k-NN algorithm.
\subsubsection{Design and implementation of the embedded wash-out controller}
Based on the constructed \emph{coarse-timestepper}, we designed and implemented a wash-out controller on the basis of the DMs coordinates $(\mathbf{y}_1, \mathbf{y}_2)$.~Given that $(\mathbf{y}_1, \mathbf{y}_2)$ lie on a disk, as shown in Fig.~\ref{fig:TrM_AIO}(b), we transformed the DMs embeddings in polar coordinates and designed the wash-out controller on the basis of the one-dimensional radial component.~The sampling time (reporting horizon) was set to $T=100$.~Based on the \emph{coarse-timestepper}, we computed the Jacobian matrix $\mathbf{A}$ and control matrix $\mathbf{B}$ ``on demand'' and through Linear Quadratic Regulation (LQR), we obtained the optimal gain matrices $\mathbf{K}$ and $D$, which in the case of the lifting operator constructed via GHs are $\mathbf{K}= 20.51$ and $D = 0.243$, while in the case of k-NN are $\mathbf{K} = 19.59$ and $D = 0.249$.~The corresponding emergent closed-loop responses are shown in Fig.~\ref{fig:TrM_AIO}(e,f) in $\sigma$ coordinates.\par
For comparison purposes with the intuition/physics-based coarse-grained variable $\sigma$, we also designed a wash-out controller directly based on $\sigma$.
For this task via LQR, we estimated the gain matrices to be $\mathbf{K}=0.744$ and $D=0.264$.~The corresponding closed-loop response of the system is shown in Fig~\ref{fig:TrM_AIO}(e,f).
As shown in Fig~\ref{fig:TrM_AIO}(e), all controllers successfully drive the system to the actual emergent open-loop unstable travelling wave state at $\sigma=0.1738$, while the open-loop emergent response diverges from the desired steady-state (black dashed and dashed-dotted curves). The \emph{EVFML} control response fluctuates slightly around the nominal value of $h_0=2.4$ due to the corresponding numerical approximation errors introduced by the numerical solution of the out-of-sample extension and pre-image problems.
When compared to the ``$\sigma$-controller'', the \emph{EVFML} controllers result in a smaller initial divergence from the unstable steady-state.~Comparing the two different lifting operators (the one constructed with GHs and the one constructed with k-NN), the proposed GHs scheme results in smaller fluctuations of the control response $h(t)$ around its nominal value; both lifting extensions introduce relative changes less than 0.25\% with respect to the nominal value.
%
%
%
%
\section{Case study 2. Control of unstable stationary states of a stochastic agent-based model of a simple financial market with mimesis}
\label{sec:TradMim}
Our second agent-based model describes the stochastic interactions of a large set of $N$ identical agents trading the same asset \cite{omurtag2006modeling,siettos2012equation}.~The internal state of each agent is described by the propensity to buy or sell an asset, that is considered a real numerical value in $[-1,1]$.~Each agent receives good as well as bad exogenous news about the market, arriving randomly to each agent with Poisson rates $\nu_{ex}^+$ and $\nu_{ex}^-$, respectively.~The response of an agent to good/bad news is the discrete increase/decrease jump of its state by $\epsilon^+$/$\epsilon^-$.~Hence, the internal state of each agent $X_i$ for $i=1,\ldots,N$ changes in an event-driven fashion upon the information Poisson arrival times $t_k$, $k=1,2,\ldots$.~Between these arrival times, when no news arrive, the tendency of an agent to buy or sell fades out, so that its state drifts exponentially to zero with a decay rate constant $\gamma$.~In addition, when an agent's state arrives/surpasses the limits $\pm$1, the agent immediately buys or sells and returns to its neutral state at zero.\par  
The agents follow a mimetic behavior, thus they are influenced from the overall buying or selling trend of the market, say, $R^+$ and $R^-$, respectively, defined as the fractions of agents buying or selling per unit time.~The $R^+$ and $R^-$ change the frequency of arrival of the exogenous, either good or bad, news such that
\begin{equation}
\nu^{\pm} = \nu_{ex}^{\pm}+gR^{\pm},
\label{eq:nuR}
\end{equation}
where $g$ in Eq.~\eqref{eq:nuR} quantifies the average number of social links of the agents.~Since each agent buys/sells in a discrete time manner, we define the overall buying and selling rates as averages over a small, finite interval say $t_{rep}$, 
physically reflecting that the gross activity is not broadcast instantaneously, but is instead reported at regular short time intervals (see \cite{siettos2012equation}).
After the buying/selling action of an agent (i.e., when $X_i> 1$ or $X_i< -1$), the agent's internal state returns to neutral 
and the overall buying/selling rates increase in the next interval $t_{rep}$, thus, the probability of all other agents to buy/sell increases.~Such a formulation bears strong analogies to integrate-and-fire models formulating neuronal cell activity \citep{omurtag2006modeling}

For our simulations, we considered $N=5,000$ agents with $\epsilon^+=0.075$, $\epsilon^-=-0.072$, $\nu_{ex}^+=\nu_{ex}^-=20$ and $\gamma=1$ and the reporting horizon set to $t_{rep}=0.25$.
~This model has been studied in our previous work \citep{siettos2012equation}, where we have performed an EF bifurcation analysis of the emergent dynamics, considering as macroscopic variables the Inverse Cumulative Distribution Function (ICDF) of the microscopic density function and the buying and selling rates.~Here, we assume that no such information is available, thus we employed the \emph{EVFML} framework to discover a correct set of macroscopic observables over the bifurcation parameter space and based on them to design embedded wash-out controllers.
\subsection{\textbf{Step A}. Discovery of coarse-grained variables via Diffusion Maps and the numerical solution of the out-of-sample extension and pre-image problems}
\label{sub:EFbifD_TiM}

Using the stochastic agent-based simulator, we generated $M=5,000$ random initial conditions given by
\begin{equation}
X_i(0) \in \big\{ \mathcal{N}(\mu,0.32^2): |X_i(0)| < 1 \big\}, \qquad R^{\pm}(0)=0, \qquad i=1,\ldots,N,
\label{eq:ICs_TiM}
\end{equation}
such that the initial states $X_i(0)$ are drawn from a Gaussian distribution with fixed standard deviation $\sigma = 0.32$.~The value of $\mu$ was sampled from a uniform distribution in the interval $\mathcal{U}(0,0.45)$ and the bifurcation parameter, $g$, was sampled from a uniform distribution in $\mathcal{U}(38,46)$.~We numerically integrated in time until $t=20 t_{rep}$ and kept the final point and the $t=10 t_{rep}$-th time point of each trajectory.~In the cases where solutions were blowing up (i.e., when $\overline{X}>0.5$) before the final reporting time, we kept the final point of this limit and the one at the middle of this time interval.~Hence, the data set $\mathbf{X}\in \mathbb{R}^{M\times N}$ that was fed for the DMs analysis consisted of $M=5,000$ observations of the $N=5,000$ internal states.\par
\begin{figure}[!htbp]
\centering
	\includegraphics[width=0.94\textwidth]{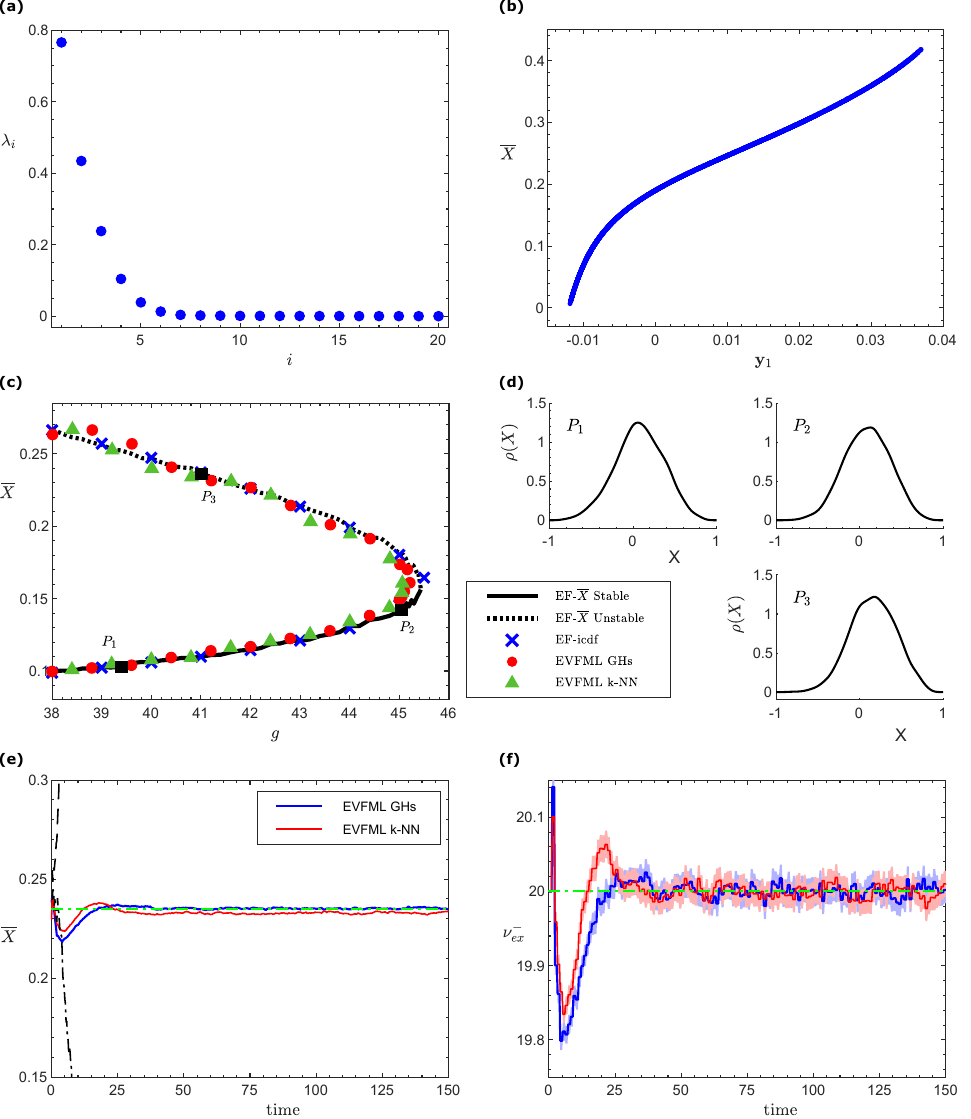}
	\caption{Stochastic agent-based model of a financial market with mimesis. (a) The first 20 eigenvalues of the DMs coordinates over the bifurcation parameter space, and (b) the corresponding first DMs coordinate, plotted against the mean value of the distribution $\overline{X}$. (c) Coarse-grained bifurcation diagrams constructed through the \emph{EVFML} framework by considering the first DMs coordinate $\mathbf{y}_1$ as the coarse-grained variable; for lifting both GHs and k-NN were considered.  Solid (dotted) lines denote stable (unstable) steady-states. ~For comparison purposes, we also depict the coarse-grained bifurcation diagram constructed using $\overline{X}$ (with black color) as coarse-grained variable that was discovered to be in a one-to-one correspondence with the first coordinate $\mathbf{y}_1$ of the DMs analysis, as well as the one constructed using the ICDF of the microscopic distribution and $R^{\pm}$ as coarse-grained variables (\citep{siettos2012equation}) on the basis of $50,000$ agents (with blue color).~(d) Density profiles of the distribution at selected points ($P_1$-$P_3$) along the bifurcation branch. (e),(f) Control of the stochastic agent-based financial market dynamics. Implementation of the wash-out embedded controller for stabilizing the coarse-grained unstable equilibrium  at $g =41$, using $\nu_{ex}^-$ as control variable.~(e) Coarse-grained open-loop and closed-loop responses with respect to $\overline{X}$, and (f) the responses of the control variable $\nu_{ex}^-(t)$. The controller drives and stabilizes the emergent dynamics to the actual open loop unstable steady-state.~Due to stochasticity, mean values (solid lines) and 95\% CI (shaded regions around them) are also reported based on $2,000$ runs.}
	\label{fig:TiM_AIO}
\end{figure}
We calculated the first 20 leading DMs by selecting $\epsilon=6.79$, thus retaining a high similarity between observations ($\sim 50\%$ of the pairwise distances $d_{ij}$ is greater than $\epsilon$).~The related eigenvalues $\lambda_i$ are displayed in Fig.~\ref{fig:TiM_AIO}(a), indicating a large spectral gap between $\lambda_1$ and the other eigenvalues.~Thus, $\mathbf{y}_1$ appears to be a good/adequate candidate for parametrizing the manifold.\par
As depicted in Fig.~\ref{fig:TiM_AIO}(b), $\mathbf{y}_1$ is related one-to-one with the first-order moment (mean value) $\overline{X}=E[X_i]$ of the microscopic distribution.~Higher-order moments did not reveal any clear relation with the other DMs embeddings.
~In addition, as Fig.~\ref{fig:TiM_AIO}(b) shows, the geometry arising from $\mathbf{y}_1$ in relation to $\overline{X}$ resembles the ICDF of $X_i$, which was utilized in \citep{siettos2012equation} for the construction of the coarse-grained bifurcation diagram.~Hence, the first DMs coordinate $\mathbf{y}_1$ provides an appropriate coarse-grained variable for the parametrization of the underlying manifold.\par
The numerical approximation performance of the restriction $\mathcal{R}$ and the lifting $\mathcal{L}$ operators over the bifurcation parameter space is presented in the Supplementary Material S2.1. ~Using a LOOCV, it is shown that the proposed GHs scheme provides a higher numerical approximation accuracy when compared to the k-NN algorithm.

\subsection{\textbf{Step B}. Equation/Variable-free numerical bifurcation analysis}
\label{subsub:BifD_TiM}

We proceeded with the construction of the \emph{coarse-timestepper} (Eq.~\eqref{eq:CTS}) with the coarse-grained variable being $\mathbf{y}_1$, thus ``wrapping'' around it the pseudo-arc-length continuation scheme.~Since the financial market model is stochastic, for each initial condition, we considered the average value of $2,000$ runs.~For the numerical simulations, we considered a healing step of $t_{skip}=8 t_{rep}$ and an evolution time step (see Eq.~\eqref{eq:CTS_imp}) of $\delta=4 t_{rep}$, while the continuation step sizes were set as $\delta g=0.05$ around the turning point.~The coarse-grained bifurcation diagrams as obtained by employing the \emph{EVFML} approach using the GHs and k-NN algorithms for lifting are shown in Fig.~\ref{fig:TiM_AIO}(c,d).\par
For comparison purposes, we constructed the coarse-grained bifurcation diagram with the aid of the EF framework by considering $\overline{X}$ as coarse-grained variable exploiting its one-to-one correspondence with the $\mathbf{y}_1$ coordinate that was discovered via the DMs analysis.
~In order to ensure consistent realizations, we performed lifting by sampling the internal agents' states from a Gaussian distribution of mean $\mu=E[X_i]$ and a fixed standard deviation $\sigma=0.32$.~For constructing the \emph{coarse-timestepper}, we selected the healing step $t_{skip}=4 t_{rep}$, since the system was shown to ``heal'' faster with this lifting operator.~All the other parameters were kept the same.
~The resulting bifurcation diagrams are superimposed in Fig.~\ref{fig:TiM_AIO}(c) along with the one derived in \citep{siettos2012equation} using the ICDF and the $R^{\pm}$ as coarse-grained variables for $50,000$ agents.
As Fig.~\ref{fig:TiM_AIO}(c) shows, the bifurcation diagrams derived using the $\mathbf{y}_1$ DMs coordinate and its one-to-one correspondence $\overline{X}$ as coarse-grained variable are almost identical to the one derived in \citep{siettos2012equation} where it was assumed a prior detailed knowledge of the physics.~Both stable and unstable branches were accurately detected, with the turning point estimated at ($\overline{X} \approx 0.1562, g\approx 45.43)$ when using $\overline{X}$ as coarse-grained variable and at $g \approx 45.21$ and $g \approx 45.06$ when using the DMs embedding as coarse-grained variable, coupled with GHs and k-NN for the lifting operator, respectively.

\subsection{\textbf{Step C}. Equation/Variable-free control}
\label{sub:EF_cont_TiM} 

From the numerical bifurcation analysis, we now have all the numerical quantities required for the design of linear controllers on the embedded space (the Jacobian matrix $\mathbf{A}$ and the control matrix $\mathbf{B}$).~For our illustrations, we have designed wash-out controllers to stabilize the emergent unstable open-loop equilibrium at $g = 41$ (see Fig.~\ref{fig:TiM_AIO}(c)), by selecting the exogenous arrival rate of ``bad'' information $u_0=\nu_{ex}^-$ as the control variable. 
~Without control, even if one starts exactly on the actual emergent unstable equilibrium, due to the inherent stochasticity, the system either blows-up or goes towards the emergent stable solution branch (dashed and dashed-dotted trajectories in Fig~\ref{fig:TiM_AIO}(e), respectively).

\subsubsection{Reconstruction of the low-dimensional manifold over the control parameter space}
\label{subsub:DM_TiM_Cont}
In order to increase the numerical accuracy for the implementation of the embedded controller, we repeated the procedure of step A over the control parameter space around the embedded steady-states that we seek to stabilize. Thus, we fixed $g=41$ and varied the control variable $\nu_{ex}^-$ around its nominal value to create a denser data set around the steady-state of interest.~Initial conditions were generated according to Eq.~\eqref{eq:ICs_TiM}; we considered $\mu$ to vary in an interval appropriate for generating various trajectories close to the unstable equilibrium.~For our computations, the new data set for the design of the embedded controller consisted of $31 \times 11 = 341$ trajectories, in the interval $t\in[0,40 t_{rep}]$ with a sampling time of reporting horizon $t_{rep}$, initialized with a value of $\mu\in [0.2, 0.4]$ every $d\mu=0.02$ and with variation of the control parameter $\nu_{ex}^-\in [19,21]$ with $d\nu_{ex}^-=0.066$.~In the cases where the solutions were blowing up (i.e., when $\overline{X}>0.5$) before $40 t_{rep}$, we kept the trajectories up to this blowing up limit.~Hence, the resulting new data set $\mathbf{X}\in\mathbb{R}^{M \times N}$ contained $M=8343$ observations of $N=5000$ internal states of the agents. We computed the first 20 DMs embeddings, this time by considering $\epsilon=8.10$.~As reported in the Supplementary Material S2.2, the first leading DMs coordinate $\mathbf{y}_1$ appears to be an appropriate coarse-grained variable, since (i) there is a large spectral gap between $\lambda_1$ and $\lambda_2$ (see Fig.~S5(a)), and (ii) $\mathbf{y}_1$ is in one-to-one relation with the first-order moment $\overline{X}=E[X_i]$ (see Fig.~S5(b)).\par
The numerical approximation performance of the restriction $\mathcal{R}$ and the lifting $\mathcal{L}$ operators over the control parameter space was assessed using a randomly distributed subset of $5,000$ observations drawn from the new local data set $\mathbf{X}$. A comparison with the results obtained with the k-NN algorithm is also provided.~As demonstrated in the Supplementary Material S2.2, following a LOOCV, the proposed GHs scheme results to a higher numerical approximation accuracy compared to the k-NN algorithm.

\subsubsection{Design and implementation of the embedded wash-out controller}
Based on the constructed \emph{coarse-timestepper}, we designed and implemented a wash-out controller using the first leading DM coordinate $\mathbf{y}_1$.~The sampling time/reporting horizon was set to $T=4 t_{rep}$.~Based on LQR, we obtained the optimal gain matrices $\mathbf{K}$ and $D$, which in the case of the lifting operator constructed via GHs are $\mathbf{K}=264.44$ and $D=0.159$, while in the case of k-NN are $\mathbf{K}=219.75$ and $D=0.176$.
The coarse-grained closed-loop response is shown in Fig.~\ref{fig:TiM_AIO}(e) in $\overline{X}$ coordinates after implementing the lifting operators with the GHs and k-NN algorithms; the mean values and the 95\% CI are displayed on the basis of the $2,000$ runs.~As shown in  Fig.~\ref{fig:TiM_AIO}(e), the \emph{EVFML} controllers successfully drive the system to the actual emergent open-loop unstable equilibrium.
~As it is shown in Fig.~\ref{fig:TiM_AIO}(f), the control response fluctuates slightly around its nominal value $\nu_{ex}^-=20$, due to the corresponding numerical approximation errors introduced by the numerical solution of the out-of-sample extension and pre-image problems and due to the inherent stochasticity.~Comparing the two different lifting operators the proposed GHs scheme results in smaller fluctuations of the control response $\nu_{ex}^-(t)$ around its nominal value; both lifting operators introduce relative changes less than 1\%  with respect to the nominal value, which become even smaller ($<0.1$\%) after $t>30$. 
%
%
%
%
\section{Conclusions}
\label{sec:con}
Recent theoretical and technological advances in machine and deep learning \cite{karniadakis2021physics} have revolutionized the way we model \cite{chen1990non,rico1992discrete,gonzalez1998identification,raissi2018hidden,lee2020coarse,lu2021learning,chen2021solving,SifanWang,galaris2022numerical,vlachas2022multiscale}, solve \cite{raissi2018hidden,han2018solving,raissi2019physics,cai2020deep,chen2021solving,dong2021local,schiassi2021extreme,fabiani2021numerical,calabro2021extreme,lu2021deepxde,dong2022computing}, analyze \cite{fabiani2021numerical,galaris2022numerical} and control high-dimensional multiscale/complex systems with applications ranging from complex fluids and materials \citep{fan2020reinforcement,zeng2022data,sirignano2020dpm,lye2020deep,vlachas2022multiscale,zhang2022analyses} and process control \citep{siettos2002truncated,siettos2002semiglobal,aggelogiannaki2008nonlinear,wu2019real,wu2019a,wu2019b,vaupel2020accelerating} to the biological and biomedical sciences \citep{peng2021multiscale,lovelett2019partial} and from environmental engineering and climate change \citep{rasp2018deep,bury2021deep} to sustainable mobility and robotics \cite{harel2020autonomics,lombardi2021using}. 
The high-dimensionality that intrinsically characterizes the state-space of relevant multiscale/complex problems, compounded by the inherent modelling uncertainties across scales, severely challenge our ability to efficiently understand, learn, analyze and control their collective behavior. \citep{karniadakis2021physics,peng2021multiscale}. 
Uncertainties range from the selection of an appropriate set of macroscopic variables for the description of the emergent behavior, to the closures that are required to bridge the micro- and macro- scales and allow the construction of reduced-order surrogate machine-learning models (see e.g. \citep{lee2020coarse,chen2021solving,galaris2022numerical,vlachas2022multiscale}) and/or learning the non-linear differential operators using for example DeepOnet \cite{lu2021learning} or Fourier Neural Operator for parametric PDEs \citep{li2020fourier}. For example, for the extraction of PDEs from data, 
challenges remain across algorithmic identification/learning steps pertaining to: (a) the ``correct'' structure of the evolution equation for the emergent dynamics (e.g., the inclusion or not of integral terms) \cite{li2007deciding},
(b) higher-order spatial derivatives from strongly noisy microscopic simulations, (c) the correct coarse-grained boundary conditions for problems with non-trivial dynamics such as the ones of the stochastic agent-based model that we considered here, and/or (d) the type of the ``color'' of the noise when comes to the identification of SDEs. 
{\em Ad-hoc} assumptions taken for the solution of the above problems may introduce qualitative and quantitative biases or even failures in the learning, analysis and control of the emergent dynamics.\par
Here, we demonstrated a data-driven equation-free/variable-free framework, integrating the Equation-free multiscale framework with Machine Learning, and in particular with non-linear manifold learning and Geometric Harmonics, to perform dimensionality reduction and control of the collective dynamics of complex systems whose dynamics are described by microscopic/agent-based simulators.
Our approach does not require an \emph{a-priori} knowledge of the ``correct'' macroscopic observables, nor physical insights on the ``correct'' type of PDEs, nor, importantly, on the extensive data collection necessary for the construction of global surrogate reduced-order machine-learning models. 
The learning of the coarse-dynamics is achieved locally, ``on demand'', using the agent-based simulations to identify a set of coarse-grained macroscopic variables both globally and locally, depending on the task and the numerical quantities involved in the design of linear controllers. 
The controllers are designed on an intrinsic manifold, and act directly on the high-dimensional space. For our illustrations, we have chosen to design and implement wash-out linear controllers. We also demonstrate that the scheme is robust against modelling uncertainties that are introduced from the construction of the coarse-timestepper related to the solution of the out-of-sample extension and pre-image problems.\par
Finally, we note that for many real-world complex problems, especially in biomedicine, or in social and financial sciences, high-fidelity simulations such as the ones  produced via detailed microscopic/agent-based models are simply not available. In such cases, one has to also deal with issues such as data sparsity and strong/varying ``colored'' stochasticity. This further impedes our ability to efficiently identify relevant data-driven manifolds \citep{gajamannage2019nonlinear} and consequently to the design of embedded controllers. Working at such a data-sparse limit remains a major open challenge; exploiting
local ML models to design informative data collection -as part of overall solution process- fits the ``on demand" structure of equation-free computation. We hope that linking the approach presented here with adaptive data collection and processing may lead to systematic construction of an ``exploration geometry". 


%
%
%
\clearpage
\newpage
\renewcommand{\thetable}{S\arabic{table}}  
\renewcommand{\thefigure}{S\arabic{figure}}
\renewcommand{\thesection}{S\arabic{section}} 
\setcounter{section}{0}
\section*{Supplementary Material}
\section{Agent-based Traffic Model}
\subsection{Step A. Numerical Approximation Accuracy of the Restriction and Lifting operators for the Coarse-grained Numerical Bifurcation Analysis}

Here, we report the numerical approximation accuracy of the restriction and lifting operators constructed on the basis of the DMs embeddings over the bifurcation parameter space of the traffic model using the leave-one-out cross-validation (LOOCV) procedure described in Section~\ref{subsub:LOOCV}.~The resulting numerical approximation errors are given in Table~\ref{tab:Norms_TrM_BD} in terms of three metrics, namely the $L_1$, $L_2$ and $L_{\infty}$ norms; 95\% confidence intervals (CI) are also provided.
\begin{table}[!h]
    \centering
    \caption{Numerical approximation accuracy in terms of $L_1$, $L_2$ and $L_{\infty}$ norms of the restriction operator (obtained by coupling DMS with the Nystr\"{o}m method) (left column) and the lifting operator (obtained by coupling DMs with GHs and k-NN) (middle and right columns).}
    \begin{tabular}{l| c c | c c | c c}
    \toprule
    Operator   & \multicolumn{2}{c}{Restriction $\mathcal{R}$} & \multicolumn{2}{|c}{Lifting $\mathcal{L}$ with GHs} & \multicolumn{2}{|c}{Lifting $\mathcal{L}$ with k-NN} \\
    \midrule
    Error norm & mean $\times 10^{-4}$ & 95\% CI $\times 10^{-4}$ & mean $\times 10^{-2}$ & 95\% CI $\times 10^{-2}$ & mean $\times 10^{-2}$ & 95\% CI $\times 10^{-2}$ \\
    \midrule
    $\lVert \cdot \rVert_1$ & 2.788  & (2.747, 2.829) & 7.338 & (7.193,7.482) & 8.896 & (8.728,9.064) \\
    $\lVert \cdot \rVert_2$ & 2.194  & (2.163, 2.226) & 2.087 & (2.042,2.127) & 2.603 & (2.553,2.654)  \\
    $\lVert \cdot \rVert_{\infty}$ & 1.979  & (1.950, 2.008) & 1.028 & (1.005,1.051) & 1.367 & (1.339,1.395) \\
    \bottomrule
    \end{tabular}
    \label{tab:Norms_TrM_BD}
\end{table}
\begin{figure}[!hb]
    \centering
    \includegraphics[width=\textwidth]{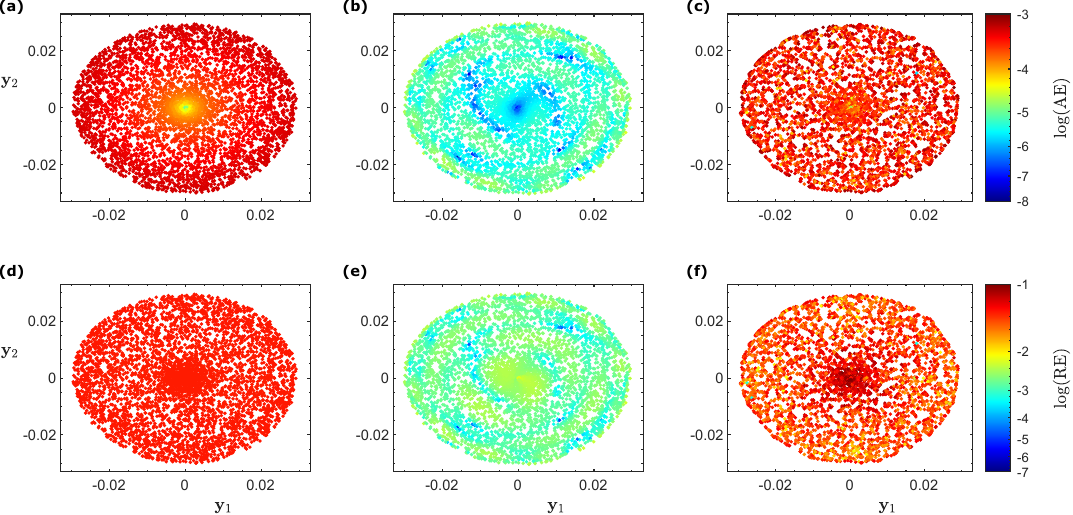}
    \caption{~Visualization of the numerical approximation accuracy of the restriction $\mathcal{R}$ operator and the restriction-lifting composition $\mathcal{R} \circ \mathcal{L}$  based on the data set of $\mathbf{X}\in \mathbb{R}^{5000 \times 20}$ points used to perform DMs over the bifurcation parameter space.~The numerical approximation accuracy was obtained through a LOOCV procedure as described in Section~\ref{subsub:LOOCV}.~Panels (a-c) and (d-f) depict the absolute and relative approximation $L_2$ norm errors, respectively.~Panels (a,d) correspond to the restriction operator (obtained by coupling DMs and the Nystr\"{o}m method), and panels (b,e) and (c,f) correspond to the restriction-lifting composition $\mathcal{R} (\mathcal{L})$ using GHs and k-NN for lifting, respectively.}
        \label{fig:testOp_bif_TrM}
\end{figure}
For implementing the lifting operator with GHs, we have set $\tilde{\epsilon}=0.025$ and $\delta= 2.17 \times 10^{-8}$.~This choice results to 60 eigenvectors of the ``second'' DMs embeddings.~For implementing the lifting operator with k-NN, $k=8$ neighbors were considered; no significant accuracy improvement was reported with more neighbors.~As shown in Table~\ref{tab:Norms_TrM_BD}, a high numerical approximation accuracy is reported for the restriction operator (obtained by coupling DMs with the Nystr\"{o}m method) and the lifting operator (obtained by coupling DMs with both GHs and k-NN algorithms).~As it is shown, lifting with GHs was more accurate than the one obtained with the k-NN algorithm.~The absolute and relative errors of the restriction $\mathcal{R}$ operator and the restriction-lifting composition $\mathcal{R}(\mathcal{L})$ are visualized in Fig.~\ref{fig:testOp_bif_TrM}, indicating that the composition $\mathcal{R}(\mathcal{L})$ using GHs for lifting is two orders of magnitude more accurate  than that of k-NN (mean relative error with GHs $<0.01\%$, whilst $1.63\%$ with k-NN).

%
%
%
\subsection{Step C. Diffusion Maps and Numerical Approximation Accuracy of the Restriction and Lifting operators for the Embedded Control}

In order to increase the numerical accuracy for the implementation of the embedded controller, we considered a local data set around the embedded steady-states that we
seek to stabilize, this time over the control parameter space.~The employment of the ``local'' DMs for the discovery of the appropriate set of coarse-grained variables that parameterize the manifold around the steady-states that we
seek to stabilize is shown in Fig.~\ref{fig:DMs_Cont_TrM}.~It is shown that $(\mathbf{y}_1, \mathbf{y}_2)$ are an appropriate set of DMs coordinated to parametrize the manifold, since they retrieve the geometry of employed the data set and a large spectral gap develops among $\lambda_2$ and $\lambda_3$.
\begin{figure}[!ht]
	\centering
	\includegraphics[width=0.94\textwidth]{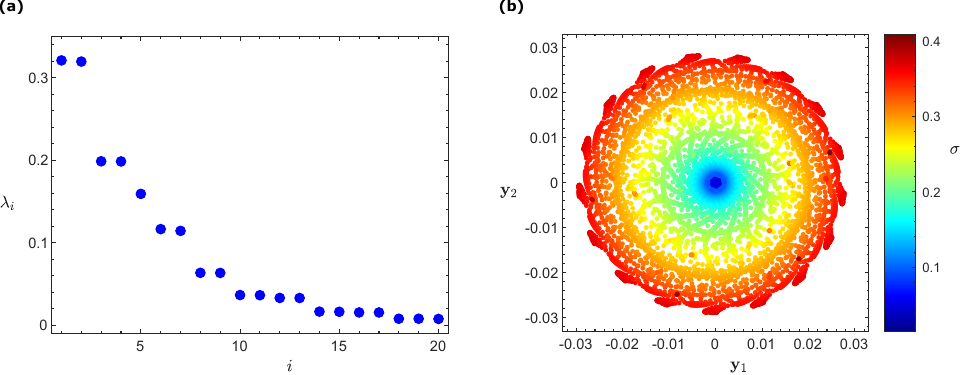} 
	\caption{~``Local'' DMs over the control parameter space: (a) the first 20 DMs eigenvalues and (b) the first two DMs coordinates $(\mathbf{y}_1, \mathbf{y}_2)$, colored by the standard deviation $\sigma$.~The results are obtained on a data set $\mathbf{X}\in \mathbb{R}^{10000\times 20}$ constructed via agent-based simulations at $v_0=1.0099$, thus varying the control parameter $h\in [2.39, 2.41]$.}
	\label{fig:DMs_Cont_TrM}
\end{figure}
Here, we report the numerical approximation accuracy of the restriction and lifting operators constructed on the basis of the DMs embeddings over the control parameter space.~Using the LOOCV procedure described in Section~\ref{subsub:LOOCV}, we assessed  the numerical approximation performance of the out-of-sample extension and the pre-image problems, this time by considering a randomly distributed subset of $5,000$ observations drawn from the new local data set $\mathbf{X}$.~The resulting numerical approximation errors in terms of $L_1$, $L_2$ and $L_{\infty}$ norms, are given in Table~\ref{tab:NormsCont}; 95\% confidence intervals (CI) are also provided.
\begin{table}[!h]
    \centering
    \caption{Numerical approximation accuracy in terms of $L_1$, $L_2$ and $L_{\infty}$ norms, of the restriction operator (obtained by coupling DMs with the Nystr\"{o}m method) (left column) and the lifting operator (obtained by coupling DMs with GHs and k-NN ) (middle and right columns) on the data set used for the construction of the wash-out embedded controller.}
    \begin{tabular}{l| c c | c c | c c}
    \toprule
    Operator   & \multicolumn{2}{c}{Restriction $\mathcal{R}$} & \multicolumn{2}{|c}{Lifting $\mathcal{L}$ with GHs} & \multicolumn{2}{|c}{Lifting $\mathcal{L}$ with k-NN} \\
    \midrule
    Error norm & mean $\times 10^{-5}$ & 95\% CI $\times 10^{-5}$ & mean $\times 10^{-2}$ & 95\% CI $\times 10^{-2}$ & mean $\times 10^{-2}$ & 95\% CI $\times 10^{-2}$ \\
    \midrule
    $\lVert \cdot \rVert_1$ & 3.385  & (3.277, 3.492) & 12.417 & (11.800,13.034) & 12.308 & (11.698,12.919) \\
    $\lVert \cdot \rVert_2$ & 2.659  & (2.575, 2.743) & 3.201  & (3.043,3.358) & 3.299 & (3.143,3.455)  \\
    $\lVert \cdot \rVert_{\infty}$ & 2.394  & (2.318, 2.470) & 1.312 & (1.248,1.376) & 1.463 & (1.399,1.526) \\
    \bottomrule
    \end{tabular}
    \label{tab:NormsCont}
    \vspace{-10pt}
\end{table}
For reconstructing the ambient space with GHs, we have set $\tilde{\epsilon}=7.23 \times 10^{-3}$ and $\delta = 5.58\times 10^{-8}$. This results to 500 eigenvectors of the second DMs embeddings, while for the reconstruction with the k-NN algorithm, we considered $k=8$ neighbors.~As shown in Table~\ref{tab:NormsCont}, a high numerical approximation accuracy is achieved for the restriction and lifting operators around the unstable equilibrium; lifting with GHs is slightly more accurate than lifting with k-NN.~Regarding the absolute and relative errors of the restriction $\mathcal{R}$ operator and the restriction-lifting composition $\mathcal{R}(\mathcal{L})$, as visualized in Fig.~\ref{fig:testOp_cont_TrM}, the composition $\mathcal{R}(\mathcal{L})$ using GHs for lifting is two orders of magnitude more accurate than that of k-NN.
\begin{figure}[!hb]
     \centering
     \includegraphics[width=\textwidth]{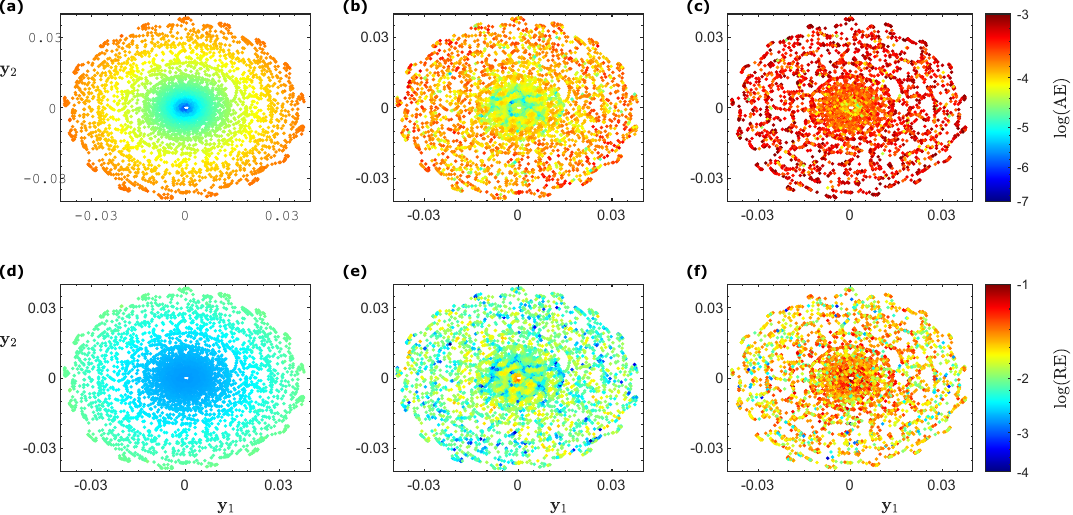}
    \caption{~Visualization of the numerical approximation accuracy of the restriction operator $\mathcal{R}$, and the restriction-lifting composition $\mathcal{R} \circ \mathcal{L}$, based on the data set of $\mathbf{X}\in \mathbb{R}^{10000 \times 20}$ points used to perform DMs over the control parameter space.~The numerical approximation accuracy was obtained through a LOOCV procedure in a subset of $5,000$ randomly distributed data points of $\mathbf{X}\in \mathbb{R}^{10000 \times 20}$.~Panels (a-c) and (d-f) depict the absolute and relative  $L_2$ norm errors, respectively.~Panels (a,d) correspond to the restriction operator (obained by coupling DMs with the Nystr\"{o}m method) and panels (b,e) and (c,f) correspond to the restriction-lifting composition $\mathcal{R} (\mathcal{L})$ using GHs and k-NN for lifting, respectively.}
    \label{fig:testOp_cont_TrM}
\end{figure}

\clearpage
\newpage
%
%
%
\section{The Stochastic Agent-based Model of a Financial Market with Mimesis}

\subsection{Step A. Numerical Approximation Accuracy of the Restriction and Lifting operators for the Coarse-grained Numerical Bifurcation Analysis}

Here, we report the numerical approximation accuracy of the restriction and lifting operators constructed on the basis of the DMs embeddings over the bifurcation parameter space using the LOOCV procedure described in Section~\ref{subsub:LOOCV}.~In Table~\ref{tab:Norms_InvM}, we report the mean numerical approximation errors of the two operators in terms of $L_1$, $L_2$ and $L_{\infty}$ norms; the 95\% CIs are also reported.~Note that the $L_1$-norm error of the lifting operator is of $\mathcal{O}(10)$, due to the high number $N=5,000$ of agents considered.
\begin{table}[!h]
    \centering
    \caption{Numerical approximation accuracy in terms of $L_1$, $L_2$ and $L_{\infty}$ norms of the restriction operator (obtained by coupling DMs with the Nystr\"{o}m method) (left column) and the lifting operator (obtained by coupling DMS with GHs and k-NN) (middle and right columns) that are used for the construction of the bifurcation diagram.}
    \begin{tabular}{l| c c | c c | c c}
    \toprule
    Operator   & \multicolumn{2}{c}{Restriction $\mathcal{R}$} & \multicolumn{2}{|c}{Lifting $\mathcal{L}$ with GHs} & \multicolumn{2}{|c}{Lifting $\mathcal{L}$ with k-NN} \\
    \midrule
    Error norm & mean $\times 10^{-6}$ & 95\% CI $\times 10^{-6}$ & mean $\times 10^{-1}$ & 95\% CI $\times 10^{-1}$ & mean $\times 10^{-1}$ & 95\% CI $\times 10^{-1}$ \\
    \midrule
    $\lVert \cdot \rVert_1$ & 1.210  & (1.103, 1.317) & 192.12 & (190.59, 190.37) & 163.73 & (162.43,165.02) \\
    $\lVert \cdot \rVert_2$ & 1.210  & (1.103, 1.317) & 3.806  & (3.790, 3.832) & 3.243 & (3.222, 3.265)  \\
    $\lVert \cdot \rVert_{\infty}$ & 1.210  & (1.103, 1.317) & 0.578 & (0.571, 0.586) & 0.492 & (0.485,0.498) \\
    \bottomrule
    \end{tabular}
    \label{tab:Norms_InvM}   
\end{table}
\begin{figure}[!hb]
     \centering
     \includegraphics[width=\textwidth]{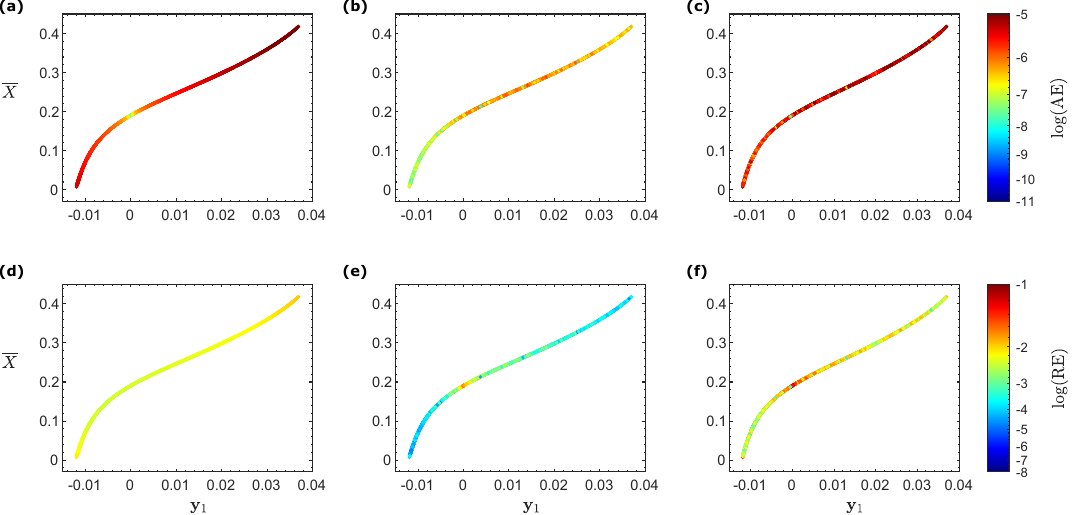}
     \caption{~Visualization of the numerical approximation accuracy of the restriction operator $\mathcal{R}$, and the restriction-lifting $\mathcal{R} \circ \mathcal{L}$ composition, based on the data set of $\mathbf{X}\in \mathbb{R}^{5000 \times 5000}$ points used to perform DMs over the bifurcation parameter space.~The numerical approximation accuracy was obtained through a LOOCV procedure.~For visualization purposes, we plot the first DMs coordinates $\mathbf{y}_1$ with respect to $\overline{X}$.~Panels (a-c) and (d-f) depict the absolute and relative $L_2$ norm errors, respectively.~Panels (a,d) correspond to the restriction operator (obtained by coupling DMs with the Nystr\"{o}m method) and panels (b,e) and (c,f) correspond to the restriction-lifting composition $\mathcal{R} (\mathcal{L})$ using GHs and k-NN for lifting, respectively.}
        \label{fig:testOp_TiM}
\end{figure}
For implementing the lifting operator with GHs, we have set $\tilde{\epsilon}=3.67 \times 10^{-4}$ and $\delta= 3.97 \times 10^{-11}$. This results to 400 eigenvectors for the second DMs embeddings.~For implementing the lifting operator with k-NN, we considered $k=3$ neighbors, since no significant accuracy improvement was reported with more neighbors.~As shown in Table~\ref{tab:Norms_InvM}, high numerical approximation accuracy is reported for the restriction operator (obtained by coupling DMs with the Nystr\"{o}m method) and the lifting operator (obtained by coupling DMs with both GHs and k-NN algorithms).~A visualization of the absolute and relative errors of the restriction $\mathcal{R}$ operator and restriction-lifting composition $\mathcal{R}(\mathcal{L})$ is provided in Fig.~\ref{fig:testOp_TiM}, further indicating that the composition $\mathcal{R}(\mathcal{L})$ using GHs for lifting is by two orders of magnitude more accurate than that of k-NN  (mean relative error with GHs 0.0039\%, whilst 0.082\% with k-NN).

\subsection{Step C. Diffusion Maps and Numerical Approximation Accuracy of the Restriction and Lifting operators for Embedded Control}

For increasing the numerical accuracy of the embedded controller, we considered a local data set around the embedded steady-state that we seek to stabilize, this time over the control parameter space.~The employment of the ``local'' DMs for the discovery of the appropriate set of coarse-grained variables that parameterize the manifold around the steady-states that we
seek to stabilize is shown in Fig.~\ref{fig:DMs_Cont_TiM}.~It is shown that  $\mathbf{y}_1$ is an appropriate coordinate for parametrizing the manifold, since a large spectral gap develops among $\lambda_1$ and $\lambda_2$ and $\mathbf{y}_1$ is in one-to-one relation with the first-order moment $\overline{X}=E[X_i]$.
\begin{figure}[!ht]
	\centering
	\includegraphics[width=\textwidth]{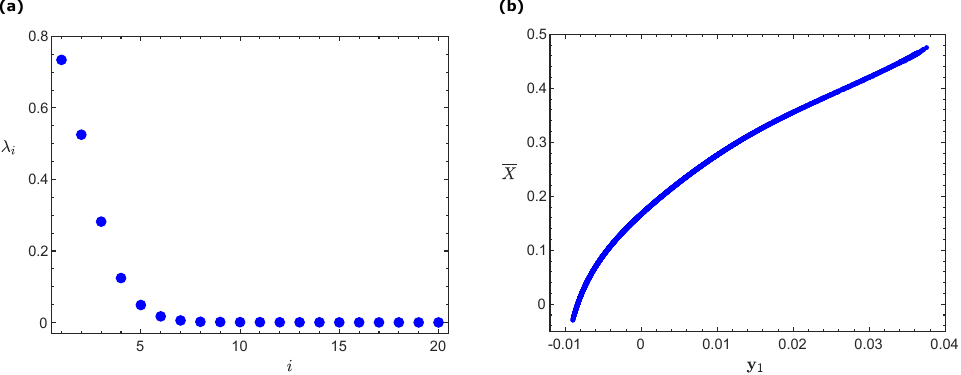}
	\caption{~``Local'' DMs over the control parameter space: (a) the first 20 DMs eigenvalues, and (b) the first DM embedding $\mathbf{y}_1$ in comparison to mean $\overline{X}$.~The data set $\mathbf{X} \in \mathbb{R}^{8343 \times 5000}$ for the data-driven controller design includes trajectories of agent-based simulations with fixed $g=41$, thus varying the control parameter $\nu_{ex}^-\in[19,21]$.}
	\label{fig:DMs_Cont_TiM}
\end{figure}
In order to assess the numerical approximation accuracy of the restriction and lifting operators constructed on the basis of the DMs embeddings over the control parameter space, we used the LOOCV procedure described in Section~\ref{subsub:LOOCV} to a randomly distributed subset of $5,000$ observations drawn from the new local data set $\mathbf{X}$.~Table~\ref{tab:Norms_InvM_Cont} reports the numerical approximation errors of the restriction operator $\mathcal{R}$ (obtained coupling DMs with the Nystr\"{o}m method), and the lifting operator $\mathcal{L}$ (obtained by coupling DMs with GHs and k-NN algorithms).
\begin{table}[ht]
    \centering
    \caption{Numerical approximation accuracy in terms of $L_1$, $L_2$ and $L_{\infty}$ norms of the restriction operator (obtained by coupling DMS with the Nystr\"{o}m method) (left column) and the lifting operator (obtained by coupling DMs with GHs and k-NN) (middle and right columns) on the data set used for the design of the wash-out embedded controller.}
    \begin{tabular}{l| c c | c c | c c}
    \toprule
    Operator   & \multicolumn{2}{c}{Restriction $\mathcal{R}$} & \multicolumn{2}{|c}{Lifting $\mathcal{L}$ with GHs} & \multicolumn{2}{|c}{Lifting $\mathcal{L}$ with k-NN} \\
    \midrule
    Error norm & mean $\times 10^{-6}$ & 95\% CI $\times 10^{-6}$ & mean $\times 10^{-1}$ & 95\% CI $\times 10^{-1}$ & mean $\times 10^{-1}$ & 95\% CI $\times 10^{-1}$ \\
    \midrule
    $\lVert \cdot \rVert_1$ & 0.964  & (0.868, 1.060) & 200.97 & (199.25, 202.69) & 169.07 & (167.65, 170.48) \\
    $\lVert \cdot \rVert_2$ & 0.964  & (0.868, 1.060) & 3.958   & (3.929, 3.987) & 3.332 & (3.308, 3.356)  \\
    $\lVert \cdot \rVert_{\infty}$ & 0.964  & (0.868, 1.060) & 0.572  & (0.564, 0.579) & 0.476 & (0.469, 0.482) \\
    \bottomrule
    \end{tabular}
    \label{tab:Norms_InvM_Cont}   
\end{table} 
~The approximation errors are calculated in terms of $L_1$, $L_2$ and $L_{\infty}$ norms; the 95\% CIs are also reported.~Note that the realtively large $L_1$-norm error of the lifting operator with both GH and k-NN is due to the size ($N=5,000$) of the ambient space.~For reconstructing the ambient space with GHs, we have set $\tilde{\epsilon}=3.14 \times 10^{-4}$ and $\delta = 1.05\times 10^{-8}$, thus resulting to 380 eigenvectors of the second local DMs embeddings, while for the reconstruction with the k-NN algorithm, we considered $k=3$ neighbors.~As shown in Table~\ref{tab:Norms_InvM_Cont}, the restriction and lifting operators around the unstable equilibrium provide a good numerical accuracy.~In Fig.~\ref{fig:testOp_cont_TiM}, we also report the absolute and relative errors of the restriction $\mathcal{R}$ operator and the restriction-lifting composition $\mathcal{R}(\mathcal{L})$.~It is again shown that using GHs for  lifting is by almost two orders of magnitude more accurate than that of k-NN.
\begin{figure}[!hb]
     \centering
     \includegraphics[width=\textwidth]{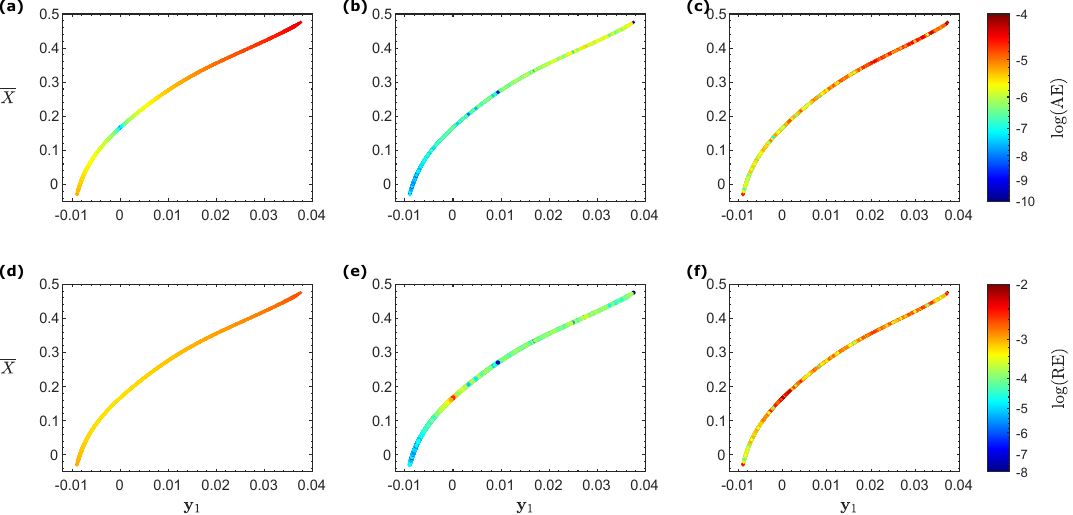}     
     \caption{Visualization of the numerical approximation accuracy of the restriction operator $\mathcal{R}$  and the restriction-lifting composition $\mathcal{R} \circ \mathcal{L}$ based on the data set of $\mathbf{X}\in \mathbb{R}^{8343 \times 5000}$ points used to perform DMs over the control parameter space.~The numerical approximation accuracy was obtained through a LOOCV procedure in a subset of $5,000$ randomly distributed data points of $\mathbf{X}\in \mathbb{R}^{8343 \times 5000}$.~For visualization purposes, we plot $\mathbf{y}_1$ with respect to $\overline{X}$.~Panels (a-c) and (d-f) depict the absolute and relative $L_2$ norm errors, respectively.~Panels (a,d) correspond to the restriction operator (obtained by coupling DMs with the Nystr\"{o}m method), and panels (b,e) and (c,f) correspond to the eestriction-lifting composition $\mathcal{R} (\mathcal{L})$, using GHs and k-NN for lifting, respectively.}
        \label{fig:testOp_cont_TiM}
\end{figure}

\end{document}